\def\R{{\mathbb R}}
\def\d{{\rm d}}
\newcommand{\ee}{\mathrm{e}}
\newcommand{\ii}{\mathrm{i}}
\def\BB{B}
\def\ff{f}
\def\email#1{\ead{#1}}
\newcommand{\Z}{\mathbb{Z}}
\newcommand{\vv}{\boldsymbol{v}}
\newcommand{\Sch}{\mathscr{S}}
\newcommand{\loc}{\mathord{\dot{\bigtriangleup}}}
\newcommand{\rd}{\mathrm{d}}
\newcommand{\Div}{\nabla \cdot}
\newcommand{\Grad}{\nabla}
\newcommand{\supp}{{\rm supp\,}}
\newcommand{\abs}[1]{\left| #1 \right|}
\newcommand{\norm}[1]{\| #1 \|}
\newcommand{\bignorm}[1]{\left\| #1 \right\|}
\newcommand{\inner}[2]{( #1 , #2 )}
\newcommand{\dInd}{{\mathbf 1}}
\newtheorem{theorem}{Theorem}[section]
\newtheorem{corollary}[theorem]{Corollary}
\newtheorem{lemma}[theorem]{Lemma}
\newtheorem{proposition}[theorem]{Proposition}
\numberwithin{equation}{section}
\begin{document}

\begin{frontmatter}

%%% Title, authors and addresses
%
%%% use the tnoteref command within \title for footnotes;
%%% use the tnotetext command for the associated footnote;
%%% use the fnref command within \author or \address for footnotes;
%%% use the fntext command for the associated footnote;
%%% use the corref command within \author for corresponding author footnotes;
%%% use the cortext command for the associated footnote;
%%% use the ead command for the email address,
%%% and the form \ead[url] for the home page:
%%%
%%% \title{Title\tnoteref{label1}}
%%% \tnotetext[label1]{}
\author[warwick]{David S. McCormick}
\ead{d.s.mccormick@warwick.ac.uk}
\author[EJO]{Eric J.\ Olson\corref{thing}}
\ead{eric@reno}
\author[warwick]{James C.\ Robinson}
\ead{J.C.Robinson@warwick.ac.uk}
\author[warwick]{Jose L.\ Rodrigo}
\ead{J.Rodrigo@warwick.ac.uk}
\address[warwick]{Mathematics Institute, Zeeman Building,\\ University of Warwick, Coventry CV4 7AL, UK.}
\address[EJO]{Department of Mathematics/084, University of Nevada, Reno, NV 89557. USA.}
\ead{ejolson@unr.edu}
%\address{Mathematics Institute, Zeeman Building, University of Warwick,\\ Coventry CV4 7AL, UK\fnref{MS}}
\author[XJTLU]{Alejandro Vidal-L\'opez}
\ead{Alejandro.Vidal@xjtlu.edu.cn}
\address[XJTLU]{Department of Mathematical Sciences, Xi'an Jiaotong-Liverpool University, Suzhou 215123. China P.R.}
\author[Z]{Yi Zhou}
\email{yizhou@fudan.edu.cn}
\address[Z]{School of Mathematical Sciences, Fudan University, Shanghai 200433. China P.R.}
%%% \ead[url]{home page}
%\cortext[thing]{Corresponding author}
%%% \address{Address\fnref{label3}}
%%% \fntext[label3]{}

%\title[Local existence for nonlinear heat equations]{A complete characterisation of local existence for nonlinear heat equations in Lebesgue spaces}

\title{Lower bounds on blowing-up solutions of the 3D Navier--Stokes equations in $\dot H^{3/2}$, $\dot H^{5/2}$, and $\dot B^{5/2}_{2,1}$.}

\begin{abstract}
If $u$ is a smooth solution of the Navier--Stokes equations on $\R^3$ with first blowup time $T$, we prove lower bounds for $u$ in the Sobolev spaces $\dot H^{3/2}$, $\dot H^{5/2}$, and the Besov space $\dot B^{5/2}_{2,1}$, with optimal rates of blowup: we prove the strong lower bounds $\|u(t)\|_{\dot H^{3/2}}\ge c(T-t)^{-1/2}$ and $\|u(t)\|_{\dot B^{5/2}_{2,1}}\ge c(T-t)^{-1}$, but in $\dot H^{5/2}$ we only obtain the weaker result $\limsup_{t\to T^-}(T-t)\|u(t)\|_{\dot H^{5/2}}\ge c$. The proofs involve new inequalities for the nonlinear term in Sobolev and Besov spaces, both of which are obtained using a dyadic decomposition of $u$.
\end{abstract}

%
%
%
%\author[R.\ Laister]{Robert Laister}
%\email{Robert.Laister@uwe.ac.uk}
%\address{R.\ Laister\\
%Department of Engineering Design and Mathematics, University of the West of England, Bristol BS16 1QY, UK.}
%
%
%\author[J.C.\ Robinson]{James C. Robinson}
%\email{j.c.robinson@warwick.ac.uk}
%\address{J.C.\ Robinson\\
%Mathematics Institute, Zeeman Building, University of Warwick, Coventry CV4 7AL, UK.}
%
%\author[M.\ Sier{\.z}\polhk{e}ga]{Miko\l aj Sier{\.z}\polhk{e}ga}
%\email{mikolaj.sierzega@warwick.ac.uk}
%\address{M.\ Sier{\.z}\polhk{e}ga\\
%Mathematics Institute, Zeeman Building, University of Warwick, Coventry CV4 7AL, UK.}
%
%\author[A.\ Vidal-L\'opez]{Alejandro Vidal-L\'opez}
%\email{Alejandro.Vidal@xjtlu.edu.cn}
%\address{A.\ Vidal-L\'opez\\
%Department of Mathematical Sciences, Xi'an Jiaotong-Liverpool University, Suzhou 215123, China P.~R.}

%% use optional labels to link authors explicitly to addresses:
%% \author[label1,label2]{<author name>}
%\address[corref]{<uwe>}
%% \address[label2]{<address>}
%\fntext[uwe]{Permanent address: Dept. of Engineering Design and Mathematics, University of the West of England, Bristol BS16 1QY, UK.}
%\author{R. Laister, J. C. Robinson and M. Sier{\.z}\polhk{e}ga}

%\address{Mathematics Institute, Zeeman Building, University of Warwick,\\ Coventry CV4 7AL, UK}

\begin{keyword}
%% keywords here, in the form: keyword \sep keyword
Lower bounds\sep Navier--Stokes equations\sep Blowup.
%% MSC codes here, in the form: \MSC code \sep code
%% or \MSC[2008] code \sep code (2000 is the default)

\end{keyword}

\end{frontmatter}

\def\ri{{\rm i}}

\section{Introduction}

The aim of this paper is to prove lower bounds on smooth solutions of the three-dimensional Navier--Stokes equations, under the assumption that there is a finite `first blowup time' $T$. Results of this type date back to Leray (1934), who showed that there exists an absolute constant $c_1$ such that
$$
\|u(t)\|_{H^1}\ge \frac{c_1}{\sqrt{T-t}}.
$$
In fact this result, and all subsequent lower bounds, are a consequence of upper bounds on the local existence time for solutions with initial data in $H^1$. Leray also stated (without proof) the lower bound
$$
\|u(t)\|_{L^p}\ge \frac{c}{(T-t)^{(p-3)/2p}},
$$
a proof of which can be found in Giga (1986) and Robinson \& Sadowski (2012).

More recently there have been a number of papers that treat the problem of blowup in Sobolev spaces $\dot H^s$ for $s>1/2$. Benameur (2010, with a similar periodic analysis in 2013) showed that for $s>5/2$
$$
\|u(t)\|_{\dot H^s}\ge c_s\|u(T-t)\|_{L^2}^{(3-2s)/3}(T-t)^{-s/3},
$$
which was improved by Robinson, Sadowski, \& Silva (2012) to
\begin{equation}
\|u(t)\|_{\dot H^s}\ge\begin{cases}
c(T-t)^{-(2s-1)/4}&s\in(1/2,5/2),\ s\neq 3/2,\\
c\|u_0\|_{L^2}^{(5-2s)/5}(T-t)^{-s/5}&s>5/2.
\end{cases}\label{RSS}
\end{equation}
As argued by Robinson et al.\ (2012), the bound
$$
\|u(t)\|_{\dot H^s}\ge c(T-t)^{-(2s-1)/4}
$$
is what one would expect from scaling considerations for all $s>1/2$; we refer to this here as the `optimal rate'.

We note that in the bounds in (\ref{RSS}) the cases $s=3/2$ and $s=5/2$ are excluded, and that the bounds for $s>5/2$ are not at the optimal rate. Although Benameur and Robinson et al.\ both obtained the lower bound
$$
\|\hat u(t)\|_{L^1}\ge c(T-t)^{-1/2},
$$
i.e.\ a bound with the `optimal rate' in a space with the same scaling as $\dot H^{3/2}$, no lower bound with the correct rate in any space scaling like $\dot H^{5/2}$ has previously been shown.

Recently, Cortissoz, Montero, \& Pinilla (2014) proved lower bounds in $\dot H^{3/2}$ and $\dot H^{5/2}$ at the optimal rates but with logarithmic corrections,
$$
\|u(t)\|_{\dot H^{3/2}}\ge \frac{c}{\sqrt{(T-t)|\log(T-t)|}}\quad\mbox{and}\quad\|u(t)\|_{\dot H^{5/2}}\ge \frac{c}{(T-t)|\log(T-t)|},
$$
where in both cases $c$ depends on $\|u_0\|_{L^2}$.

In this paper we fill some of these gaps. We will show that if $u$ is a smooth solution with maximal existence time $T$ then
\begin{equation}\label{CZ}
\|u(t)\|_{\dot H^{3/2}}\ge\frac{c}{(T-t)^{1/2}},
\end{equation}
which we refer to as a `strong blowup estimate', and
$$
\limsup_{t\uparrow T^*}\, (T-t)\|u(t)\|_{\dot H^{5/2}}\ge c,
$$
which we refer to as a `weak blowup estimate'. We also prove a strong blowup estimate in the Besov space $\dot B^{5/2}_{2,1}$, which has the same scaling as $\dot H^{5/2}$, $$
\|u(t)\|_{\dot B^{5/2}_{2,1}}\ge \frac{c}{T-t}.
$$

The key to these bounds are two inequalities for the nonlinear term $B(u,u)=(u\cdot\nabla)u$. Both are proved using a dyadic decomposition of $u$. The first is the Sobolev space inequality
\[
|(\Lambda^s B(u,u),\Lambda^s u)| \leq c \|u\|_{\dot{H}^s}\|u\|_{\dot H^{s+1}} \|u\|_{\dot{H}^{3/2}},\qquad s\ge 1,
\]
valid whenever the right-hand side is finite (in fact we prove a more general commutator-type estimate in Proposition \ref{prop:NSInequalityNorm}). The second is the Besov bound
$$
|(\dot\triangle_kB(u,u),\dot\triangle_ku)|\le cd_k2^{-k(d/2+1)}\|u\|_{\dot B^{5/2}_{2,1}}^2\|\dot\triangle_ku\|_{L^2},
$$
where $c$ does not depend on $k$ and $\sum_kd_k=1$. We present the proofs of these inequalities in Sections \ref{sec:H} and \ref{sec:B}, with the resulting blowup estimates given first in Sections \ref{sec:HB} and \ref{sec:BB}.

 Within the ten days prior to the submission of this paper to the arXiv, two other papers were submitted providing proofs of the lower bound in (\ref{CZ}) for $\dot H^{3/2}$ - one by Cheskidov \& Zaya (using an alternative dyadic argument) and one by Montero (using a very neat interpolation argument).

\section{Preliminaries}

In this section we prove a simple ODE lemma that provides lower bounds on solutions that blow up, and we recall the dyadic decomposition that we will use to prove our Sobolev and Besov space inequalities.

\subsection{Lower bounds and differential inequalities}

Lower bounds on solutions that blowup at some time $T>0$ can be derived from differential inequalities for the norms of the solution (i.e.\ from upper bounds on the local existence time). The following simple ODE lemma makes this precise.

\begin{lemma}\label{easy}
  If $\dot X\le cX^{1+\gamma}$ and $X(t)\to\infty$ as $t\to T$ then
  \begin{equation}\label{lowerB}
  X(t)\ge\left(\frac{1}{\gamma c(T-t)}\right)^{1/\gamma}.
  \end{equation}
\end{lemma}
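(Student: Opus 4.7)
The plan is to use the standard separation-of-variables trick for the scalar differential inequality, recognising that the quantity $X^{-\gamma}$ satisfies a linear inequality.

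First I would fix a $t_0 < T$ such that $X(t) > 0$ for all $t \in [t_0, T)$, which is possible because $X(t) \to \infty$ as $t \to T^-$. Since the desired bound (\ref{lowerB}) blows up as $t \to T^-$ and we are free to assume $c > 0$, we only need to establish the estimate on some left neighbourhood of $T$; in particular the positivity of $X$ on $[t_0,T)$ lets us divide by $X^{1+\gamma}$ without trouble.

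Next I would rewrite the hypothesis $\dot X \le c X^{1+\gamma}$ in the form
\[
-\frac{1}{\gamma}\frac{\d}{\d t}\bigl(X^{-\gamma}\bigr) = \frac{\dot X}{X^{1+\gamma}} \le c,
\]
equivalently
\[
\frac{\d}{\d t}\bigl(X(t)^{-\gamma}\bigr) \ge -\gamma c \qquad\text{on } [t_0,T).
\]
Integrating this inequality from $t$ to some $T' \in (t,T)$ yields
\[
X(T')^{-\gamma} - X(t)^{-\gamma} \ge -\gamma c (T'-t).
\]

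Finally, since $X(T') \to \infty$ as $T' \to T^-$, we have $X(T')^{-\gamma} \to 0$, and passing to the limit gives $X(t)^{-\gamma} \le \gamma c(T-t)$, which rearranges to (\ref{lowerB}). There is no real obstacle here; the only delicate point is justifying the manipulation of $\d/\d t(X^{-\gamma})$, which requires $X$ to be differentiable (or at least absolutely continuous) — standard for any norm of a smooth solution, and implicit in the statement $\dot X \le cX^{1+\gamma}$.
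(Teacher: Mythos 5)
Your proof is correct and is essentially the same as the paper's: both separate variables, integrate the inequality for $X^{-\gamma}$ from $t$ to $s<T$, and let $s\to T^-$ using $X(s)^{-\gamma}\to 0$. Your extra care about the positivity and differentiability of $X$ is a harmless refinement of the same argument.
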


\begin{proof}
  Write the differential inequality as
  $$
\frac{\d X}{X^{1+\gamma}}\le c\,\d t
$$
and integrate from $t$ to $s$ to yield
$$
\frac{1}{X(t)^\gamma}-\frac{1}{X(s)^\gamma}\le\gamma c(s-t).
$$
Letting $s\to T$ yields (\ref{lowerB}).\end{proof}

\subsection{Homogeneous Sobolev spaces}

We denote by $\dot H^s(\R^n)$ the space
$$
\left\{u:\ \hat u\in L^1_{\rm loc}(\R^n):\ \int_{\R^n}|\xi|^{2s}|\hat u(\xi)|^2\,\d\xi<\infty\right\},
$$
where
\begin{equation}\label{FT}
\mathscr{F}[u](\xi) = \hat{u}(\xi) = \int_{\R^{n}} \ee^{-2\pi \ii x \cdot \xi} u(x) \, \rd x
\end{equation}
is the Fourier transform of $u$. We denote by $\Lambda^s$ the operator with Fourier multiplier $|\xi|^s$; then the norm in $\dot H^s$ is given by
$$
\|u\|_{\dot H^s}=\|\Lambda^su\|_{L^2}=\||\xi|^s\hat u(\xi)\|_{L^2}=\int_{\R^n}|\xi|^{2s}|\hat u(\xi)|^2\,\d k.
$$

\subsection{Homogenous Besov spaces}

Here we recall some of the standard theory of homogeneous Besov spaces which we will use throughout the paper; we refer the reader to Bahouri et al.\ (2011),  for proofs and many more details that we must omit.

%\subsection{Definitions}
%\label{sec:BesovDefns}

For the purposes of this section, given a function $\phi$ and $j \in \Z$ we denote by $\phi_{j}$ the dilation
\[
\phi_{j}(\xi) = \phi(2^{-j}\xi).
\]
Let $\mathcal{C}$ be the annulus $\{ \xi \in \R^{n} : 3/4 \leq |\xi| \leq 8/3 \}$. There exist radial functions $\chi \in C^{\infty}_{c}(B(0, 4/3))$ and $\varphi \in C^{\infty}_{c}(\mathcal{C})$ both taking values in $[0,1]$ such that
\begin{subequations}
\label{eqn:DyadicPartition}
\begin{align}
&\text{for all } \xi \in \R^{n}, & \chi(\xi) + \sum_{j \geq 0} \varphi_{j}(\xi) &= 1, \label{eqn:DyadicPartition1} \\
&\text{for all } \xi \in \R^{n} \setminus \{ 0 \} , & \sum_{j \in \Z} \varphi_{j}(\xi) &= 1, \label{eqn:DyadicPartition2} \\
&\text{if } |j - j'| \geq 2, \text{ then } & \hspace{-0.5in} \supp \varphi_{j} \cap \supp \varphi_{j'} &= \varnothing, \label{eqn:DyadicPartition3} \\
&\text{if } j \geq 1, \text{ then } & \supp \chi \cap \supp \varphi_{j} &= \varnothing. \label{eqn:DyadicPartition4}
%\intertext{the set $\widetilde{\mathcal{C}} := B(0, 2/3) + \mathcal{C}$ is an annulus, and}
%&\text{if } |j - j'| \geq 5, \text{ then } & 2^{j'}\widetilde{\mathcal{C}} \cap 2^{j} \mathcal{C}  &= \varnothing. \label{eqn:DyadicPartition5}
%\intertext{Furthermore, we have}
%&\text{for all } \xi \in \R^{n}, & \frac{1}{2} \leq \chi^{2}(\xi) + \sum_{j \geq 0} \varphi_{j}^{2} (\xi) &\leq 1, \label{eqn:DyadicPartition6} \\
%&\text{for all } \xi \in \R^{n} \setminus \{ 0 \} , & \frac{1}{2} \leq \sum_{j \in \Z} \varphi_{j}^{2} (\xi) &\leq 1. \label{eqn:DyadicPartition7}
\end{align}
\end{subequations}
We let $h = \mathscr{F}^{-1} \varphi$ and $\widetilde{h} = \mathscr{F}^{-1} \chi$, where ${\mathscr F}^{-1}$ is the inverse of the Fourier transform operator defined in (\ref{FT}). 
%Given a measurable function $\sigma$ defined on $\R^{n}$ with at most polynomial growth at infinity, we define the Fourier multiplier operator $M_{\sigma}$ by $M_{\sigma} u := \mathcal{F}^{-1} (\sigma \hat{u})$.

%For $j \in \Z$, the \emph{inhomogeneous dyadic blocks} $\Loc_{j}$ are defined as follows:
%\begin{align*}
%& \text{if } j \leq -2, \hspace{-0.5in} & \Loc_{j} u &= 0, \\
%& & \Loc_{-1} u &= M_{\chi} u = \int_{\R^{n}} \widetilde{h}(y) u(x-y) \, \rd y, \\
%& \text{if } j \geq 0, \hspace{-0.5in} & \Loc_{j} u &= M_{\varphi_{j}} u = 2^{jn} \int_{\R^{n}} h(2^{j} y) u(x-y) \, \rd y.
%\end{align*}
%The inhomogeneous low-frequency cut-off operator $S_{j}$ is defined by
%\[
%S_{j}u := \sum_{j' \leq j-1} \Loc_{j'} u.
%\]
Given a measurable function $\sigma$ defined on $\R^{n}$ with at most polynomial growth at infinity, we define the Fourier multiplier operator $M_{\sigma}$ by $M_{\sigma} u := \mathcal{F}^{-1} (\sigma \hat{u})$. For $j \in \Z$, the \emph{homogeneous dyadic blocks} $\loc_{j}$ and the homogeneous cut-off operator $\dot{S}_{j}$ are defined by setting
\begin{align*}
\loc_{j} u &= M_{\varphi_{j}} u = 2^{jn} \int_{\R^{n}} h(2^{j} y) u(x-y) \, \rd y\qquad\mbox{and} \\
\dot{S}_{j} u &= M_{\chi_{j}} u = 2^{jn} \int_{\R^{n}} \widetilde{h}(2^{j} y) u(x-y) \, \rd y.
\end{align*}
Formally, we can write the following \emph{Littlewood--Paley decomposition}
\[
%\mathop{\mathrm{Id}} = \sum_{j \in \Z} \Loc_{j} \qquad \text{and} \qquad
\mathop{\mathrm{Id}} = \sum_{j \in \Z} \loc_{j}.
\]
%In the inhomogeneous case, the decomposition makes sense in $\Sch'(\R^{n})$: if $u \in \Sch'(\R^{n})$ is a tempered distribution, then $u = \lim_{j \to \infty} S_{j} u$ in $\Sch'(\R^{n})$. Unfortunately, the homogeneous case is a little more involved.

We denote by $\Sch'_{h}(\R^{n})$ the space of tempered distributions such that
\[
\lim_{\lambda \to \infty} \norm{M_{\theta(\lambda \, \cdot \,)} u}_{L^{\infty}} = 0 \quad \text{ for any } \theta \in C^{\infty}_{c}(\R^{n}).
\]
Then the homogeneous decomposition makes sense in $\Sch'_{h}(\R^{n})$: if $u \in \Sch'_{h}(\R^{n})$, then $u = \lim_{j \to \infty} \dot{S}_{j} u$ in $\Sch'_{h}(\R^{n})$. Moreover, using the homogeneous decomposition, it is straightforward to show that
\[
\dot{S}_{j}u = \sum_{j' \leq j-1} \loc_{j'} u.
\]

Given a real number $s$ and two numbers $p, r \in [1, \infty]$, the \emph{homogeneous Besov space} $\dot{B}^{s}_{p,r}(\R^{n})$ consists of those distributions $u$ in $\Sch'_{h}(\R^{n})$ such that
\[
\norm{u}_{\dot{B}^{s}_{p,r}} := \bigg( \sum_{j \in \Z} 2^{rjs} \norm{\loc_{j} u}_{L^{p}}^{r} \bigg)^{1/r} < \infty
\]
if $r < \infty$, and
\[
\norm{u}_{\dot{B}^{s}_{p,\infty}} := \sup_{j \in \Z} 2^{js} \norm{\loc_{j} u}_{L^{p}} < \infty
\]
if $r = \infty$. For each of these spaces all choices of the function $\varphi$ used to define the blocks $\dot\triangle_j$ lead to equivalent norms and hence to the same space.

Note that if $u \in \Sch'_{h}(\R^{n})$ belongs to $\dot{B}^{s}_{p,r}(\R^{n})$ then
there exists a non-negative sequence $(d_{j})_{j \in \Z}$ such that
\begin{equation}
\label{eqn:StandardTrick}
\norm{\loc_{j} u}_{L^{p}} \leq d_{j} 2^{-js}\|u\|_{\dot B^s_{p,r}}\ \forall\ j\in\Z, \qquad \text{where} \qquad \norm{(d_{j})}_{\ell^{r}} = 1.
\end{equation}

\section{Blowup estimates in $\dot H^{3/2}$ (strong) and $\dot H^{5/2}$ (weak)}\label{sec:HB}

The proofs of the blow up results follows easily from upper bounds on the nonlinear term. We postpone a detailed presentation of the estimates and proofs of these bounds until Section \ref{sec:H}.  In this section we assume those estimates, and present a  straightforward proof of the strong blowup estimate in $\dot H^{3/2}$, and, with an additional contradiction argument,  of the weak blowup estimate in $\dot H^{5/2}$.

\begin{theorem}\label{thm:H32}
  Suppose that $u$ is a classical solution of the Navier--Stokes existence with maximal existence time $T$. Then
  \begin{equation}\label{H32lower}
\|u(T-t)\|_{\dot H^{3/2}}^2\ge c_{3/2}^{-2}t^{-1}.
\end{equation}
\end{theorem}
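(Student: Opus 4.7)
The plan is to derive a differential inequality for $X(t):=\|u(t)\|_{\dot H^{3/2}}^2$ that matches the hypothesis of Lemma \ref{easy} with $\gamma=1$, and then apply that lemma directly.

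First I would apply $\Lambda^{3/2}$ to the Navier--Stokes equation and take the $L^2$ inner product with $\Lambda^{3/2}u$. Since $u$ is divergence-free the pressure term drops out, the viscous term contributes $\|u\|_{\dot H^{5/2}}^2$, and one arrives at the standard energy identity
\[
\frac{1}{2}\frac{\d}{\d t}\|u\|_{\dot H^{3/2}}^2+\|u\|_{\dot H^{5/2}}^2=-(\Lambda^{3/2}B(u,u),\Lambda^{3/2}u).
\]
Now I would invoke the key Sobolev-space nonlinear estimate stated in the introduction (to be proved in Section \ref{sec:H}) with $s=3/2$, which yields
\[
|(\Lambda^{3/2}B(u,u),\Lambda^{3/2}u)|\le c\,\|u\|_{\dot H^{3/2}}^2\|u\|_{\dot H^{5/2}}.
\]
Applying Young's inequality $ab\le a^2/2+b^2/2$ with $a=c\|u\|_{\dot H^{3/2}}^2$ and $b=\|u\|_{\dot H^{5/2}}$ absorbs the $\|u\|_{\dot H^{5/2}}^2$ term into the viscous dissipation and leaves the closed differential inequality
\[
\frac{\d}{\d t}\|u\|_{\dot H^{3/2}}^2\le c^2\|u\|_{\dot H^{3/2}}^4.
\]

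Setting $X(t)=\|u(t)\|_{\dot H^{3/2}}^2$ this is exactly $\dot X\le c^2 X^{1+1}$, so with $\gamma=1$ Lemma \ref{easy} gives $X(t)\ge [c^2(T-t)]^{-1}$ provided $X(t)\to\infty$ as $t\to T^-$. The latter blowup of the $\dot H^{3/2}$ norm is forced by $T$ being the maximal existence time, because any of the usual local well-posedness results in $\dot H^{3/2}$ (or equivalently in $H^1\cap \dot H^{3/2}$) would extend the solution past $T$ if $\|u(t)\|_{\dot H^{3/2}}$ stayed bounded. Relabelling $t\mapsto T-t$ yields (\ref{H32lower}) with $c_{3/2}=c$.

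The only genuinely delicate step is the nonlinear estimate; the rest of the argument is a routine energy-plus-Young-plus-ODE-lemma pattern. In particular, the factor $\|u\|_{\dot H^{3/2}}\|u\|_{\dot H^{s+1}}\|u\|_{\dot H^s}$ on the right-hand side is precisely what is needed for Young's inequality to produce $X^2$ (rather than some higher power) on the right of the differential inequality, and hence for $\gamma=1$ to give the \emph{optimal} $(T-t)^{-1/2}$ blowup rate in the $\dot H^{3/2}$ norm. Any nonlinear estimate that replaced $\|u\|_{\dot H^{3/2}}$ on the right by a stronger norm would only yield a worse exponent via Lemma \ref{easy}, so the sharpness of the estimate from Section \ref{sec:H} is exactly what makes the strong blowup estimate work.
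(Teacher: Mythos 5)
Your proposal is correct and follows essentially the same route as the paper: the $\dot H^{3/2}$ energy identity, the nonlinear estimate \eqref{whatwewant} with $s=3/2$, Young's inequality to absorb the dissipation, and Lemma \ref{easy} with $\gamma=1$. The only addition is your (welcome, and correct) remark that the blowup of $\|u(t)\|_{\dot H^{3/2}}$ as $t\to T^-$ follows from local well-posedness in that space, a point the paper leaves implicit.
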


\begin{proof}
  We take the inner product of the equation with $u$ in $\dot H^{3/2}$, i.e.\ we apply $\Lambda^{3/2}$ and take the inner product with $\Lambda^{3/2}u$,
\begin{align*}
\frac{1}{2}\frac{\d}{\d t}\|u\|_{\dot H^{3/2}}^2+\|u\|_{\dot H^{5/2}}^2&=(\Lambda^{3/2}B(u,u),\Lambda^{3/2}u)\\
&\le c_{3/2}\|u\|_{\dot H^{3/2}}^2\|u\|_{\dot H^{5/2}},
\end{align*}
using the inequality
$$\abs{\inner{\Lambda^s[(u \cdot \nabla)u]}{\Lambda^su}} \leq c \|u\|_{\dot{H}^s}\|u\|_{\dot H^{s+1}} \|u\|_{\dot{H}^{3/2}}\qquad s\ge 1,
$$
from (\ref{whatwewant})  with $s=3/2$, which is proved in Section \ref{sec:H}. We use Young's inequality on the right-hand side to obtain
$$
\frac{\d}{\d t}\|u\|_{\dot H^{3/2}}^2+\|u\|_{\dot H^{5/2}}^2\le c_{3/2}^2\|u\|_{\dot H^{3/2}}^4.
$$
Dropping the second term on the left-hand side, the required lower bound follows immediately from Lemma \ref{easy}.\end{proof}

We now use a contradiction argument to obtain a weak lower bound in $\dot H^{5/2}$ at the correct rate.

\begin{theorem}
  Suppose that $u$ is a classical solution of the Navier--Stokes existence with maximal existence time $T$. Then
  \begin{equation}\label{limsup}
  \limsup_{t\uparrow T}\,(T-t)\|u(t)\|_{\dot H^{5/2}}\ge c.
  \end{equation}
\end{theorem}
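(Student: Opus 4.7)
The plan is a contradiction argument that pits an assumed upper bound on $\|u(t)\|_{\dot H^{5/2}}$ against the already-established strong lower bound in $\dot H^{3/2}$ from Theorem \ref{thm:H32}. Suppose, for contradiction, that the limsup in (\ref{limsup}) is strictly less than some small $\alpha$ to be chosen. Then there exists $t_0\in(0,T)$ such that
\[
\|u(t)\|_{\dot H^{5/2}}\le\frac{\alpha}{T-t}\qquad\text{for all }t\in[t_0,T).
\]

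The next step is to feed this into the same differential inequality used in the proof of Theorem \ref{thm:H32}. That argument yielded, before invoking Young's inequality, the estimate
\[
\tfrac{1}{2}\tfrac{\d}{\d t}\|u\|_{\dot H^{3/2}}^2+\|u\|_{\dot H^{5/2}}^2\le c_{3/2}\|u\|_{\dot H^{3/2}}^2\|u\|_{\dot H^{5/2}},
\]
so, dropping the nonnegative dissipation term and writing $X(t):=\|u(t)\|_{\dot H^{3/2}}^2$, we obtain $\dot X\le 2c_{3/2}\|u\|_{\dot H^{5/2}}X\le \frac{2c_{3/2}\alpha}{T-t}X$ on $[t_0,T)$. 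Gronwall then produces the linear upper bound
\[
X(t)\le X(t_0)\left(\frac{T-t_0}{T-t}\right)^{2c_{3/2}\alpha}.
\]

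Finally I would combine this with the lower bound $X(t)\ge c_{3/2}^{-2}(T-t)^{-1}$ from Theorem \ref{thm:H32}. The two together force
\[
c_{3/2}^{-2}\le X(t_0)\,(T-t_0)^{2c_{3/2}\alpha}\,(T-t)^{\,1-2c_{3/2}\alpha}\qquad\text{for all }t\in[t_0,T),
\]
and if one chooses $\alpha$ so that $2c_{3/2}\alpha<1$, the right-hand side tends to zero as $t\uparrow T$, which is the desired contradiction. Thus (\ref{limsup}) holds with any $c<1/(2c_{3/2})$.

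The only real subtlety, and what I would expect to be the main obstacle, is making sure the constants line up: the same $c_{3/2}$ must govern both the blowup exponent coming from Lemma \ref{easy} applied in Theorem \ref{thm:H32} and the Gronwall exponent here. This is automatic because both arise from the \emph{same} trilinear inequality \eqref{whatwewant} with $s=3/2$, but it is the point that must be handled carefully to ensure the threshold $\alpha<1/(2c_{3/2})$ is genuinely positive and explicit.
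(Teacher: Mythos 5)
Your proof is correct and follows essentially the same route as the paper: assume the limsup bound fails, feed the resulting estimate $\|u(t)\|_{\dot H^{5/2}}\le\alpha/(T-t)$ into the $\dot H^{3/2}$ energy inequality, integrate via Gronwall/an integrating factor, and contradict the strong lower bound of Theorem \ref{thm:H32} when $2c_{3/2}\alpha<1$. The only (harmless) difference is that you discard the dissipation term $-\|u\|_{\dot H^{5/2}}^2$ immediately, whereas the paper carries it along using the monotonicity of $ax-x^2$ before ultimately dropping the resulting negative term anyway.
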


\begin{proof}
  We proceed by contradiction, and suppose that for $\tau\le t\le T$,
\begin{equation}\label{H32lower}
\|u(t)\|_{\dot H^{5/2}}\le\varepsilon(T-t)^{-1},
\end{equation}
where $\varepsilon$ is chosen so that $2c_{3/2}\varepsilon<1$. Then on this interval
$$
\frac{1}{2}\frac{\d}{\d t}\|u\|_{\dot H^{3/2}}^2\le  c_{3/2}\|u\|_{\dot H^{3/2}}^2\|u\|_{\dot H^{5/2}}-\|u\|_{\dot H^{5/2}}^2.
$$
Since $ax-x^2$ is increasing in $x$ while $x\le a/2$, and by assumption
$$
\|u(t)\|_{\dot H^{5/2}}\le\frac{\varepsilon}{T-t}\le \frac{\frac{1}{2}c_{3/2}^{-1}}{T-t}\le \frac{1}{2}\left[c_{3/2}\|u(t)\|_{\dot H^{3/2}}^2\right],
$$
it follows that
$$
\frac{\d}{\d t}\|u\|_{\dot H^{3/2}}^2\le 2c_{3/2}\|u\|_{\dot H^{3/2}}^2\frac{\varepsilon}{T-t}-\frac{2\varepsilon^2}{(T-t)^{-2}}.
$$
Using the integrating factor $(T-t)^{2c_{3/2}\varepsilon}$ (note that the exponent is $<1$) this becomes
$$
\frac{\d}{\d t}\left(\|u\|_{\dot H^{3/2}}^2(T-t)^{2c_{3/2}\varepsilon}\right)\le-\varepsilon^2(T-t)^{-(2-2c_{3/2}\varepsilon)}.
$$
Now drop the right-hand side and integrate from $\tau$ to $t$ to conclude that
\begin{align*}
\|u(t)\|_{\dot H^{3/2}}^2&\le\|u(\tau)\|_{\dot H^{3/2}}(T-\tau)^{2c_{3/2}\varepsilon}(T-t)^{2c_{3/2}\varepsilon}\\
&=C_\tau(T-t)^{2c_{3/2}\varepsilon},
\end{align*}
which contradicts (\ref{H32lower}) provided that $2c_{3/2}\varepsilon<1$, which we assumed above. It follows that there exist $t_k\to T$ such that
$$
\|u(t_k)\|_{\dot H^{5/2}}\ge (4c_{3/2})^{-1}t_k^{-1}
$$
and (\ref{limsup}) follows.
\end{proof}

Note that this bound does not use directly any differential inequality governing the evolution of $\|u\|_{\dot H^{5/2}}$.

\section{Strong blowup estimate in $\dot B^{5/2}_{2,1}$.}\label{sec:BB}

Although we have been unable to prove a strong lower bound in $\dot H^{5/2}$ at the correct rate (i.e.\ $\|u(t)\|_{\dot H^{5/2}}\ge c/(T-t)$) we can obtain such a bound in the Besov space $\dot B^{5/2}_{2,1}$, which has the same scaling. Again the proof relies on estimates of the nonlinear term, which we delay until Section \ref{sec:B}.

\begin{theorem}
 Suppose that $u$ is a classical solution of the Navier--Stokes existence with maximal existence time $T$. Then
  \begin{equation}\label{Besov-blowup}
  \|u(t)\|_{\dot B^{5/2}_{2,1}}\ge \frac{c}{T-t}.
  \end{equation}
\end{theorem}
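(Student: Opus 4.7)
The plan is to mirror the argument for Theorem \ref{thm:H32}, but carried out at the level of dyadic blocks and then summed. Apply $\loc_k$ to the Navier--Stokes equation and take the $L^2$ inner product with $\loc_k u$. Using the Bernstein inequality $\|\nabla \loc_k u\|_{L^2}^2 \ge c_0 2^{2k}\|\loc_k u\|_{L^2}^2$ for the dissipative term and the Besov block bound
$$
|(\loc_k B(u,u), \loc_k u)| \le c\, d_k\, 2^{-5k/2}\|u\|_{\dot B^{5/2}_{2,1}}^2\|\loc_k u\|_{L^2}
$$
announced in the introduction (and proved in Section \ref{sec:B}), I would obtain
$$
\frac{1}{2}\frac{\d}{\d t}\|\loc_k u\|_{L^2}^2 + c_0\, 2^{2k}\|\loc_k u\|_{L^2}^2 \le c\, d_k\, 2^{-5k/2}\|u\|_{\dot B^{5/2}_{2,1}}^2\|\loc_k u\|_{L^2}.
$$

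Next I would divide through by $\|\loc_k u\|_{L^2}$ (using a standard $\sqrt{\|\loc_k u\|_{L^2}^2+\varepsilon^2}$ regularisation to handle the possibility that this quantity vanishes on a null set, then sending $\varepsilon\to 0$) to obtain
$$
\frac{\d}{\d t}\|\loc_k u\|_{L^2} + c_0\, 2^{2k}\|\loc_k u\|_{L^2} \le c\, d_k\, 2^{-5k/2}\|u\|_{\dot B^{5/2}_{2,1}}^2.
$$
Multiplying by $2^{5k/2}$ and summing over $k\in\Z$, the definition of the Besov norm and the fact that $\sum_k d_k=1$ give, after dropping the non-negative dissipation term,
$$
\frac{\d}{\d t}\|u\|_{\dot B^{5/2}_{2,1}} \le c\,\|u\|_{\dot B^{5/2}_{2,1}}^2.
$$

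Since $T$ is the maximal existence time and $\dot B^{5/2}_{2,1}$ is a critical space for which bounded norm yields continuation of the solution, one has $\|u(t)\|_{\dot B^{5/2}_{2,1}} \to \infty$ as $t\uparrow T$. Lemma \ref{easy} with $\gamma=1$ then delivers the bound $\|u(t)\|_{\dot B^{5/2}_{2,1}} \ge 1/[c(T-t)]$, which is exactly (\ref{Besov-blowup}).

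The main substantive work—the block estimate for the nonlinearity—is postponed to Section \ref{sec:B}, so within this proof there is no serious obstacle; the only technical care needed is the regularisation when differentiating the square root to pass from $\|\loc_k u\|_{L^2}^2$ to $\|\loc_k u\|_{L^2}$, and a brief justification that $\|u(t)\|_{\dot B^{5/2}_{2,1}}$ is unbounded near $T$ so that Lemma \ref{easy} applies.
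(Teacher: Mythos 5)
Your proposal is correct and follows essentially the same route as the paper: dyadic energy estimate on $\loc_k u$, the block bound on the nonlinearity, multiplication by $2^{5k/2}$ and summation, then Lemma \ref{easy}. The only cosmetic difference is that the paper explicitly adds and subtracts $\sum_i\dot S_{k-1}u_i\partial_i\loc_k u$ and uses the vanishing of $\inner{\dot S_{k-1}u\cdot\nabla\loc_k u}{\loc_k u}$ before invoking Proposition \ref{Besovestimate}, whereas you invoke the already-paired estimate from the introduction (which encodes that same cancellation); your extra remarks on regularising $\|\loc_k u\|_{L^2}$ and on the unboundedness of the norm near $T$ are sensible points the paper leaves implicit.
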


\begin{proof}
  We consider the equation for $\loc_ku$, which can be rewritten (by adding and subtracting the term involving the summation in $i$) as
  $$
  \frac{\d}{\d t}\loc_ku-\Delta\loc_ku+ \left[\loc_k((u\cdot\nabla)u) -\sum_i\dot{S}_{k-1}u_i\partial_{i}\loc_k u\right]+\sum_i \dot{S}_{k-1}u_i\partial_{i}\loc_k u  =0,
  $$
  since $\loc_k$ and $\Delta$ commute. Taking the inner product in $L^2$ with $\loc_ku$ yields
  $$
  \frac{1}{2}\frac{\d}{\d t}\|\loc_ku\|_{L^2}^2+\|\nabla\loc_ku\|_{L^2}^2\le\left\|\loc_k((u\cdot\nabla)u) -\sum_i\dot{S}_{k-1}u_i\partial_{i}\loc_k u\right\|_{L^2}\|\loc_ku\|_{L^2}.
  $$
  We drop the second term on the left-hand side and divide by $\|\loc_ku\|_{L^2}$, to yield
  \begin{align*}
  \frac{\d}{\d t}\|\loc_ku\|_{L^2} &\le \left\|\loc_k((u\cdot\nabla)u) -\sum_i\dot{S}_{k-1}u_i\partial_{i}\loc_k u\right\|_{L^2}\\
  %&\le d_k(t)2^{-5k/2}\|\triangle_k((u\cdot\nabla)u)\|_{\dot B^{5/2}_{2,1}}\\
  &\le d_k(t)2^{-5k/2}\|u\|_{\dot B^{5/2}_{2,1}}^2,
  \end{align*}
using Proposition  \ref{Besovestimate}, and  where $\sum d_k(t)=1$ for each $t$.

  We now multiply by $2^{5k/2}$ and sum to obtain
  $$
  \frac{\d}{\d t}\|u\|_{\dot B^{5/2}_{2,1}}\le c\|u\|_{\dot B^{5/2}_{2,1}}^2,
  $$
  from which (\ref{Besov-blowup}) follows at once via Lemma \ref{easy}.
\end{proof}

\section{Bounds for the nonlinear term in Sobolev spaces}\label{sec:H}

In this section we will prove the bound on the nonlinear term that we used in the proof of Theorem \ref{thm:H32}, namely
$$
|(\Lambda^{3/2}B(u,u),\Lambda^{3/2}u)|\le c_{3/2}\|u\|_{\dot H^{3/2}}^2\|u\|_{\dot H^{5/2}}.
$$
In fact we prove a somewhat more general result in Corollary \ref{wwan}, which in turn is a consequence of the following commutator estimate (cf.\ Kato \& Ponce, 1988; Fefferman et al., 2014).

%
%
%\begin{lemma}
%\label{lem:NSInequalityIP}
%Take $s \geq 1$ and $s_{1}, s_{2} > 0$ such that
%\begin{equation}
%\label{eqn:Conditions}
%1 \leq s_{1} < \tfrac{n}{2} + 1 \qquad \text{and} \qquad s_{1} + s_{2} = s + \tfrac{n}{2} + 1.
%\end{equation}
%Then there exists a constant $c$ such that for all $u, \BB \in \dot{H}^{s_{1}}(\R^{n}) \cap \dot{H}^{s_{2}}(\R^{n})$ with $\Div u = 0$,
%\[
%\abs{\inner{\Lambda^{s}[(u \cdot \nabla)\BB]}{\Lambda^{s}\BB}} \leq c ( \norm{u}_{\dot{H}^{s_{1}}} \norm{\BB}_{\dot{H}^{s_{2}}} + \norm{u}_{\dot{H}^{s_{2}}} \norm{\BB}_{\dot{H}^{s_{1}}} ) \norm{\BB}_{\dot{H}^{s}}.
%\]
%\end{lemma}

%In fact, we prove the following slightly more general commutator estimate.

\begin{proposition}
\label{prop:NSInequalityNorm}
Take $s \geq 1$ and $s_{1}, s_{2} > 0$ such that
\begin{equation}
1 \leq s_{1} < \tfrac{n}{2} + 1 \qquad \text{and} \qquad s_{1} + s_{2} = s + \tfrac{n}{2} + 1. \tag{\ref{eqn:Conditions}}
\end{equation}
Then there exists a constant $c$ such that for all $u, \BB \in \dot{H}^{s_{1}}(\R^{n}) \cap \dot{H}^{s_{2}}(\R^{n})$,
\[
\norm{\Lambda^{s}[(u \cdot \nabla)\BB] - (u \cdot \nabla) (\Lambda^{s} \BB)}_{L^{2}} \leq c ( \norm{u}_{\dot{H}^{s_{1}}} \norm{\BB}_{\dot{H}^{s_{2}}} + \norm{u}_{\dot{H}^{s_{2}}} \norm{\BB}_{\dot{H}^{s_{1}}} ).
\]
\end{proposition}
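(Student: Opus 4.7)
My plan is to apply Bony's paraproduct decomposition. Writing $(u\cdot\nabla)\BB=\sum_i u^i\partial_i\BB$, for each $i$ expand
\[
u^i\partial_i\BB = T_{u^i}\partial_i\BB + T_{\partial_i\BB}u^i + R(u^i,\partial_i\BB),
\]
with $T_f g:=\sum_k \dot S_{k-1}f\,\loc_k g$ and $R(f,g):=\sum_{|k-k'|\le 1}\loc_k f\,\loc_{k'}g$, and decompose $u^i(\Lambda^s\partial_i\BB)$ the same way. Subtracting, the commutator expresses as $\mathrm{(I)}+\mathrm{(II)}+\mathrm{(III)}$ where
\begin{align*}
\mathrm{(I)} &:= \sum_{i,k}[\Lambda^s,\dot S_{k-1}u^i]\partial_i\loc_k\BB,\\
\mathrm{(II)} &:= \sum_i\bigl(\Lambda^s T_{\partial_i\BB}u^i - T_{\Lambda^s\partial_i\BB}u^i\bigr),\\
\mathrm{(III)} &:= \sum_i\bigl(\Lambda^s R(u^i,\partial_i\BB) - R(u^i,\Lambda^s\partial_i\BB)\bigr).
\end{align*}

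The heart of the argument is (I). On the shell $|\xi|\sim 2^k$ supporting $\partial_i\loc_k\BB$, the symbol $|\xi|^s$ can be Taylor-expanded to first order, yielding the Coifman--Meyer type commutator kernel bound
\[
\|[\Lambda^s,\dot S_{k-1}u^i]\partial_i\loc_k\BB\|_{L^2}\le c\,\|\nabla\dot S_{k-1}u^i\|_{L^\infty}\,2^{sk}\|\loc_k\BB\|_{L^2}.
\]
Bernstein's inequality gives $\|\nabla\loc_j u\|_{L^\infty}\le c\,2^{j(n/2+1)}\|\loc_j u\|_{L^2}$; combining this with (\ref{eqn:StandardTrick}) and summing over $j\le k-2$ (the geometric series converges because $s_1<n/2+1$) produces $\|\nabla\dot S_{k-1}u\|_{L^\infty}\le c\,2^{k(n/2+1-s_1)}\|u\|_{\dot H^{s_1}}$. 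Since the summands in (I) are spectrally localized to $|\xi|\sim 2^k$, almost orthogonality together with the identity $s_1+s_2=s+n/2+1$ and a Young/Cauchy--Schwarz convolution in $k$ yields $\|\mathrm{(I)}\|_{L^2}\le c\|u\|_{\dot H^{s_1}}\|\BB\|_{\dot H^{s_2}}$.

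Terms (II) and (III) carry no commutator cancellation, so each piece is estimated directly in $L^2$ by routine paraproduct/remainder bounds. In (II) the product $\dot S_{k-1}(\partial_i\BB)\loc_k u^i$ is spectrally localized to a shell, so $\Lambda^s$ contributes a factor $2^{sk}$; Bernstein then sends the low-frequency $\dot S_{k-1}$ factor into $L^\infty$ at a cost of $2^{k(n/2+1-s_2)}$, and the identity $s_1+s_2=s+n/2+1$ again balances the exponents --- the roles of $u$ and $\BB$ having been swapped compared with (I), this produces the symmetric term $\|u\|_{\dot H^{s_2}}\|\BB\|_{\dot H^{s_1}}$. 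For (III) one invokes the classical continuity of the remainder $R:\dot H^{s_1}\times\dot H^{s_2}\to\dot H^{s_1+s_2-n/2}=\dot H^{s+1}$ (valid since $s_1+s_2>n/2$) to control both $\Lambda^s R(u^i,\partial_i\BB)$ and $R(u^i,\Lambda^s\partial_i\BB)$ in $L^2$.

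The main obstacle is piece (I): one must genuinely extract the derivative's worth of gain from $[\Lambda^s,\dot S_{k-1}u^i]$, which rests on the Taylor expansion of $|\xi|^s$ restricted to the shell $|\xi|\sim 2^k$. The upper constraint $s_1<n/2+1$ enters precisely here, ensuring the Bernstein geometric series converges; once (I) is handled, the remaining bookkeeping for (II) and (III) reduces to standard paraproduct algebra via (\ref{eqn:StandardTrick}).
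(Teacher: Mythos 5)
Your argument is correct and reaches the stated bound, but it is organised genuinely differently from the paper's proof. The paper does not use Bony's decomposition or the commutator lemma for $[\sigma(2^{-k}D),a]$: it splits the double dyadic sum $\sum_{i,j}\loc_i u\,\nabla\loc_j\BB$ directly into the regions $i\le j-10$, $|i-j|\le 9$ and $j\le i-10$, and extracts the cancellation in the low--high piece on the Fourier side, via the pointwise symbol inequality $\bigl||\xi|^s-|\eta|^s\bigr|\le c|\eta|^{s-1}|\xi-\eta|$ of Lemma~\ref{lem:BasicIneq} followed by Young's inequality for convolutions and H\"older; the diagonal and high--low pieces are then estimated by brute force with Bernstein and a Sobolev embedding, much as you do for (II) and (III). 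Your pieces (I), (III), (II) correspond respectively to the paper's $\ff_{1,j}$, $\ff_{2,j}$ (your remainder $R$ is the near-diagonal) and $\ff_{3,i}$; both arguments exploit the commutator structure only in the low($u$)--high($\BB$) interaction, and both use the hypothesis $s_1<\tfrac n2+1$ in the same place (convergence of the low-frequency geometric series, resp.\ admissibility of the Sobolev exponent $2n/(n-2s_1+2)$). What your route buys is that the key cancellation is delegated to the standard localized-symbol commutator lemma, avoiding any explicit Fourier-side computation; what the paper's route buys is self-containedness, since Lemma~\ref{lem:BasicIneq} is elementary. One small slip to fix: in (II) the Bernstein cost of sending $\dot S_{k-1}\partial_i\BB$ into $L^\infty$ against $\norm{\BB}_{\dot H^{s_1}}$ is $2^{k(n/2+1-s_1)}$, not $2^{k(n/2+1-s_2)}$ as written --- with $s_2$ in the exponent the geometric series need not converge (nothing forces $s_2<\tfrac n2+1$; e.g.\ $s=3/2$, $s_1=3/2$, $s_2=5/2$ in $n=3$), whereas with $s_1$, which is what your stated conclusion $\norm{u}_{\dot H^{s_2}}\norm{\BB}_{\dot H^{s_1}}$ requires anyway, the exponents balance exactly via $s_1+s_2=s+\tfrac n2+1$.
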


%Lemma~\ref{lem:NSInequalityIP} follows immediately from Proposition~\ref{prop:NSInequalityNorm} using the fact that
%\[
%\inner{(u \cdot \nabla) (\Lambda^{s} \BB)}{\Lambda^{s} \BB} = 0
%\]
%whenever $\Div u = 0$.

To prove Proposition~\ref{prop:NSInequalityNorm} we need two simple lemmas. A proof of the first can be found in Fefferman et al.\ (2014); the second is an immediate consequence of Berstein's inequality (see McCormick et al., 2013, for example).

\begin{lemma}
\label{lem:BasicIneq}
If $s \geq 1$ and $|b| < |a|/2$, then 
\[
\abs{|a|^{s} - |a-b|^{s}} \leq c |a-b|^{s-1} |b|,
\]
where $c=s3^{s-1}$.
\end{lemma}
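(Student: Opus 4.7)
The inequality compares $|a|^s$ and $|a-b|^s$ under the assumption that $b$ is small relative to $a$, and the natural tool is the one-dimensional fundamental theorem of calculus applied along the segment from $a$ to $a-b$.

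The plan is as follows. First I would set up the comparisons forced by the hypothesis $|b|<|a|/2$: the reverse triangle inequality gives $|a-b|>|a|/2$, and combining this with $|b|<|a|/2<|a-b|$ yields
\[
|a|+|b| < 3|a-b|.
\]
This is the estimate we need in order to convert bounds in terms of $|a|$ into bounds in terms of $|a-b|$; the factor $3$ here is the source of the $3^{s-1}$ in the constant $c=s3^{s-1}$. A consequence of the same inequalities is that the segment $\{a-tb:t\in[0,1]\}$ never passes through the origin, since $a-tb=0$ would force $|a|=t|b|\le|b|<|a|/2$.

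Next I would apply the fundamental theorem of calculus to $\phi(t):=|a-tb|^{s}$ on $[0,1]$. Since the segment avoids $0$ and $s\ge1$, $\phi$ is $C^{1}$ on $[0,1]$ with
\[
\phi'(t) = -s\,|a-tb|^{s-2}(a-tb)\cdot b,
\qquad |\phi'(t)|\le s\,|a-tb|^{s-1}|b|.
\]
Then
\[
\bigl||a|^{s}-|a-b|^{s}\bigr| = \left|\int_{0}^{1}\phi'(t)\,\d t\right| \le s|b|\int_{0}^{1}|a-tb|^{s-1}\,\d t.
\]

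Finally I would estimate the integrand uniformly in $t\in[0,1]$. Since $|a-tb|\le |a|+t|b|\le |a|+|b|<3|a-b|$ by the first step, and $s-1\ge0$, we get $|a-tb|^{s-1}\le 3^{s-1}|a-b|^{s-1}$, yielding
\[
\bigl||a|^{s}-|a-b|^{s}\bigr| \le s\cdot 3^{s-1}\,|a-b|^{s-1}|b|,
\]
which is the claimed bound with $c=s3^{s-1}$. There is no real obstacle here; the only point that requires a moment's care is verifying that the segment from $a$ to $a-b$ stays away from the origin so that $\phi$ is genuinely $C^{1}$ when $s=1$, and this is immediate from $|b|<|a|/2$.
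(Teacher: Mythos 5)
Your proof is correct: the chain $|a-b|>|a|/2$, hence $|b|<|a-b|$ and $|a|+|b|<3|a-b|$, the observation that the segment $a-tb$ avoids the origin, and the fundamental-theorem-of-calculus bound $|\phi'(t)|\le s|a-tb|^{s-1}|b|$ all check out, and they assemble into exactly the constant $c=s3^{s-1}$. The paper itself does not prove this lemma but defers to Fefferman et al.\ (2014), where the argument is the same standard mean-value/FTC estimate you give, so your write-up simply supplies in full what the paper only cites.
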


\begin{lemma}
\label{lem:Bernstein}
There exists a constant $c$ such that, for any $k \in \Z$ and any $p,q$ with $1 \leq p \leq q \leq \infty$, if $\loc_{k} u \in L^{p}(\R^{n})$ then $\loc_{k} u \in L^{q}(\R^{n})$ and
\[
\norm{\loc_{k} u}_{L^{q}} \leq c 2^{kn(1/p-1/q)} \norm{\loc_{k} u}_{L^{p}}.
\]
\end{lemma}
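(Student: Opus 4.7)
The plan is to prove this by writing $\loc_k u$ as a convolution with a rescaled Schwartz kernel whose $L^r$ norm has the correct power of $2^k$, and then invoking Young's convolution inequality.

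First I would exploit the localisation in Fourier space. Since $\widehat{\loc_k u}(\xi) = \varphi_k(\xi)\hat u(\xi) = \varphi(2^{-k}\xi)\hat u(\xi)$, the spectrum of $\loc_k u$ lies in the annulus $2^{k}\mathcal{C} = \{3\cdot 2^{k-2} \leq |\xi| \leq 2^{k+3}/3\}$. Fix, once and for all, a radial function $\widetilde{\varphi} \in C^\infty_c(\R^n)$ supported in a slightly larger annulus (e.g.\ $\{1/2 \leq |\xi| \leq 3\}$) and identically equal to $1$ on the support of $\varphi$. Set $\widetilde h = \mathscr{F}^{-1}\widetilde{\varphi}$, which is a Schwartz function since $\widetilde{\varphi} \in C^\infty_c$. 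Writing $\widetilde{\varphi}_k(\xi) = \widetilde{\varphi}(2^{-k}\xi)$, we have $\widetilde{\varphi}_k \cdot \varphi_k = \varphi_k$, and therefore on the Fourier side $\widetilde{\varphi}_k \widehat{\loc_k u} = \widehat{\loc_k u}$, which translates to the convolution identity $\loc_k u = \widetilde h_k * \loc_k u$ with $\widetilde h_k(x) := 2^{kn}\widetilde h(2^k x)$.

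Next I would apply Young's convolution inequality with exponents chosen to match the statement: for $1 \leq p \leq q \leq \infty$, pick $r \in [1,\infty]$ by the relation $1 + \tfrac{1}{q} = \tfrac{1}{r} + \tfrac{1}{p}$, i.e.\ $\tfrac{1}{r} = 1 - \tfrac{1}{p} + \tfrac{1}{q}$; note this lies in $[0,1]$ precisely because $p \leq q$, so $r$ is well defined. Young then gives
\[
\norm{\loc_k u}_{L^q} \leq \norm{\widetilde h_k}_{L^r}\norm{\loc_k u}_{L^p}.
\]
A change of variables $y = 2^k x$ yields the scaling identity $\norm{\widetilde h_k}_{L^r} = 2^{kn(1-1/r)}\norm{\widetilde h}_{L^r}$, and by construction of $r$ the exponent equals $n(\tfrac{1}{p} - \tfrac{1}{q})$, which is exactly the factor appearing in the claim.

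The last step is to absorb the dependence on $r$ into a single constant. Since $\widetilde h$ is Schwartz it lies in every $L^r$, and by the log-convexity of $L^r$ norms $\norm{\widetilde h}_{L^r} \leq \max\{\norm{\widetilde h}_{L^1}, \norm{\widetilde h}_{L^\infty}\}$ for all $r \in [1,\infty]$, so one constant $c = c(n)$ works uniformly in $k$, $p$, $q$. I do not anticipate a genuine obstacle here; the only real care is in the choice of $\widetilde{\varphi}$ so that $\widetilde{\varphi}_k \equiv 1$ on $\supp \varphi_k$, and in checking that the Young exponent $r$ sits in the admissible range $[1,\infty]$, both of which are routine.
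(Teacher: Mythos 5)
Your proof is correct and is the standard argument for Bernstein's inequality (reproducing the kernel via a fattened annular cut-off, Young's convolution inequality, and the $L^r$ scaling of the dilated kernel); the paper itself gives no proof, simply citing the result as an immediate consequence of Bernstein's inequality, so there is nothing to contrast. The only quibble is notational: the symbol $\widetilde{h}$ is already reserved in the paper for $\mathscr{F}^{-1}\chi$, so you should rename your auxiliary kernel to avoid a clash.
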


We can now give the proof of Proposition \ref{prop:NSInequalityNorm}.

%\begin{proof}
%This is a follows immediately from Bernstein's inequality, since the support of $\widehat{\loc_{k} u}$ is contained in a ball of radius $2^{k+3}/3$.
%\end{proof}
%

\begin{proof}[Proof of Proposition~\ref{prop:NSInequalityNorm}]
Write $u = \sum_{i \in \Z} \loc_{i} u$ and $\BB = \sum_{j \in \Z} \loc_{j} \BB$; then
\begin{align*}
\ff &= \Lambda^{s}[(u \cdot \nabla)\BB] - (u \cdot \nabla) (\Lambda^{s} \BB) \\
&= \sum_{j \in \Z}  \Lambda^{s} \left[ \left( \sum_{i \in \Z} \loc_{i} u \right) \nabla \loc_{j} \BB \right] - \left( \sum_{i \in \Z} \loc_{i} u \right) \nabla \Lambda^{s} \loc_{j}\BB \\
&= \sum_{j \in \Z}  \Lambda^{s} \left[ \left( \sum_{i = -\infty}^{j-10} \loc_{i} u \right) \nabla \loc_{j} \BB \right] - \left( \sum_{i = -\infty}^{j-10} \loc_{i} u \right) \nabla \Lambda^{s} \loc_{j}\BB \\
&\qquad + \sum_{j \in \Z}  \Lambda^{s} \left[ \left( \sum_{i = j-9}^{j+9} \loc_{i} u \right) \nabla \loc_{j} \BB \right] - \left( \sum_{i = j-9}^{j+9} \loc_{i} u \right) \nabla \Lambda^{s} \loc_{j}\BB \\
&\qquad + \sum_{i \in \Z}  \Lambda^{s} \left[ \loc_{i} u \left( \sum_{j=-\infty}^{i-10} \nabla \loc_{j} \BB \right) \right] - \loc_{i} u \left( \sum_{j=-\infty}^{i-10} \nabla \Lambda^{s} \loc_{j}\BB \right) \\
&=: \sum_{j \in \Z} \ff_{1,j} + \sum_{j \in \Z} \ff_{2,j} + \sum_{i \in \Z} \ff_{3,i}.
\end{align*}

Taking the Fourier transform of $\ff_{1,j}$, we have
\[
\hat{\ff}_{1,j} (\xi) = \int_{\R^{n}} \left( |\xi|^{s} - |\eta|^{s} \right) \sum_{i = -\infty}^{j-10} \widehat{\loc_{i} u} (\xi - \eta) \eta \widehat{\loc_{j} \BB} (\eta) \, \rd \eta.
\]
Since $i \leq j-10$, $|\xi - \eta| < |\eta|/2$, so by Lemma~\ref{lem:BasicIneq} we have
\[
|\hat{\ff}_{1,j} (\xi)| \leq \int_{\R^{n}} |\xi - \eta| \left| \sum_{i = -\infty}^{j-10} \widehat{\loc_{i} u} (\xi - \eta) \right| |\eta|^{s} \widehat{\loc_{j} \BB} (\eta) \, \rd \eta.
\]
Let $q_{1}, q_{2}$ satisfy $\frac{1}{q_{1}} + \frac{1}{q_{2}} = \frac{1}{2}$ and $2 < q_{1} < \frac{n}{s_{1} - 1}$, and let $p_{1}, p_{2}$ satisfy $\frac{1}{p_{i}} = \frac{1}{q_{i}} + \frac{1}{2}$. Noting that $1 + \frac{1}{2} = \frac{1}{p_{1}} + \frac{1}{p_{2}}$, by Young's inequality for convolutions we have
\[
\norm{\hat{\ff}_{1,j}}_{L^{2}} \leq \bignorm{|\zeta| \left| \sum_{i = -\infty}^{j-10} \widehat{\loc_{i} u} (\zeta) \right|}_{L^{p_{1}}} \bignorm{|\eta|^{s} \widehat{\loc_{j} \BB} (\eta)}_{L^{p_{2}}}.
\]
As $1 - s_{1} + n/q_{1} > 0$, by H\"{o}lder's inequality we have
\begin{align*}
\bignorm{|\zeta| \left| \sum_{i = -\infty}^{j-10} \widehat{\loc_{i} u} (\zeta) \right|}_{L^{p_{1}}} &\leq \bignorm{|\zeta|^{1 - s_{1}} \dInd_{\{ |\zeta| \leq 2^{j-10} \}}}_{L^{q_{1}}} \bignorm{|\zeta|^{s_{1}} \left| \sum_{i = -\infty}^{j-10} \widehat{\loc_{i} u} (\zeta) \right|}_{L^{2}} \\
&\leq c 2^{j(1 - s_{1} + n/q_{1})} \bignorm{u}_{\dot{H}^{s_{1}}}.
\end{align*}
For the other term, by H\"{o}lder's inequality,
\begin{align*}
\bignorm{|\eta|^{s} \widehat{\loc_{j} \BB} (\eta)}_{L^{p_{2}}} &\leq \bignorm{|\eta|^{s} \dInd_{\{ 2^{j-1} \leq |\zeta| \leq 2^{j+1} \}} }_{L^{q_{2}}} \bignorm{\widehat{\loc_{j} \BB} (\eta)}_{L^{2}} \\
&\leq c 2^{j(s + n/q_{2})} \bignorm{\loc_{j} \BB}_{L^{2}},
%&= c 2^{js_{2}} \bignorm{\loc_{j} \BB}_{L^{2}},
\end{align*}
hence
\begin{align*}
\norm{\ff_{1,j}}_{L^{2}} &\leq c \bignorm{u}_{\dot{H}^{s_{1}}} 2^{j(s - s_{1} + n/q_{1} + n/q_{2} + 1)} \bignorm{\loc_{j} \BB}_{L^{2}} \\
&\leq c \bignorm{u}_{\dot{H}^{s_{1}}} 2^{js_{2}} \bignorm{\loc_{j} \BB}_{L^{2}} \\
\end{align*}
and thus
\begin{equation}
\label{eqn:Part1}
\sum_{j \in \Z} \norm{\ff_{1,j}}_{L^{2}}^{2} \leq c \bignorm{u}_{\dot{H}^{s_{1}}}^{2} \bignorm{\BB}_{\dot{H}^{s_{2}}}^{2}.
\end{equation}

For the second term, since $\left( \sum_{i = j-9}^{j+9} \loc_{i} u \right) \nabla \loc_{j} \BB$ is localised in Fourier space in an annulus centred at radius $2^{j}$, we obtain
\begin{align*}
\norm{\ff_{2,j}}_{L^{2}} &\leq \bignorm{\Lambda^{s} \left[ \left( \sum_{i = j-9}^{j+9} \loc_{i} u \right) \nabla \loc_{j} \BB \right]}_{L^{2}} + \bignorm{\left( \sum_{i = j-9}^{j+9} \loc_{i} u \right) \nabla \Lambda^{s} \loc_{j}\BB}_{L^{2}} \\
&\leq c 2^{js} \sum_{i = j-9}^{j+9} \norm{\loc_{i} u}_{L^{4}} \norm{\nabla \loc_{j} \BB}_{L^{4}} + \sum_{i = j-9}^{j+9} \norm{\loc_{i} u}_{L^{4}} \norm{\nabla \Lambda^{s} \loc_{j} \BB}_{L^{4}} \\
&\leq c 2^{j(s + n/4)} \norm{\nabla \loc_{j} \BB}_{L^{2}} \sum_{i = j-9}^{j+9} 2^{in/4} \norm{\loc_{i} u}_{L^{2}} \\
&\leq c 2^{j(s + n/2 - s_{1})} \norm{\nabla \loc_{j} \BB}_{L^{2}} \sum_{i = j-9}^{j+9} 2^{j(s_{1} - n/4)} 2^{in/4} \norm{\loc_{i} u}_{L^{2}}
\end{align*}
using Bernstein's inequality (Lemma~\ref{lem:Bernstein}). Since $|i-j| \leq 9$, $2^{j(s_{1} - n/4)} \leq c 2^{i(s_{1} - n/4)}$, so
\[
\norm{\ff_{2,j}}_{L^{2}} \leq c 2^{j(s_{2} - 1)} \norm{\nabla \loc_{j} \BB}_{L^{2}} \sum_{i = j-9}^{j+9} 2^{is_{1}} \norm{\loc_{i} u}_{L^{2}},
\]
and thus
\begin{equation}
\label{eqn:Part2}
\sum_{j \in \Z} \norm{\ff_{2,j}}_{L^{2}}^{2} \leq c \bignorm{u}_{\dot{H}^{s_{1}}}^{2} \bignorm{\BB}_{\dot{H}^{s_{2}}}^{2}.
\end{equation}

For the third term, we use the Sobolev embedding
\[
\norm{\nabla u}_{L^{p}} \leq c \norm{u}_{\dot{H}^{s_{1}}}
\]
provided $p = \frac{2n}{n-2s_{1}+2}$. Using H\"{o}lder's inequality, we obtain
\begin{align*}
\norm{\ff_{3,i}}_{L^{2}} &\leq \bignorm{\Lambda^{s} \left[ \loc_{i} u \left( \sum_{j=-\infty}^{i-10} \nabla \loc_{j} \BB \right) \right]}_{L^{2}} + \bignorm{\loc_{i} u \left( \sum_{j=-\infty}^{i-10} \nabla \Lambda^{s} \loc_{j}\BB \right) }_{L^{2}} \\
&\leq 2^{is} \norm{\loc_{i} u}_{L^{n/(s_{1} - 1)}} \bignorm{\sum_{j=-\infty}^{i-10} \nabla \loc_{j} \BB}_{L^{2n/(n-2s_{1}+2)}} \\
&\qquad + \norm{\loc_{i} u}_{L^{n/(s_{1} - 1)}} \bignorm{\sum_{j=-\infty}^{i-10} \nabla \Lambda^{s} \loc_{j} \BB}_{L^{2n/(n-2s_{1}+2)}} \\
&\leq c 2^{i(s + n/2 + 1 - s_{1})} \norm{\loc_{i} u}_{L^{2}} \norm{\BB}_{\dot{H}^{s_{1}}} \\
&\leq c 2^{is_{2}} \norm{\loc_{i} u}_{L^{2}} \norm{\BB}_{\dot{H}^{s_{1}}}
\end{align*}
using Bernstein's inequality (Lemma~\ref{lem:Bernstein}) and the fact that $2^{js} \leq 2^{is}$. Hence
\begin{equation}
\label{eqn:Part3}
\sum_{i \in \Z} \norm{\ff_{3,i}}_{L^{2}}^{2} \leq c \bignorm{u}_{\dot{H}^{s_{2}}}^{2} \bignorm{\BB}_{\dot{H}^{s_{1}}}^{2}.
\end{equation}
Combining \eqref{eqn:Part1}, \eqref{eqn:Part2} and \eqref{eqn:Part3} yields the desired result.
\end{proof}

In particular, taking $s = s_{1} = n/2$ and $s_{2} = n/2 + 1$ in Proposition~\ref{prop:NSInequalityNorm} yields
\begin{align*}
&\norm{\Lambda^{n/2}[(u \cdot \nabla)\BB] - (u \cdot \nabla) (\Lambda^{n/2} \BB)}_{L^{2}} \\
&\qquad \qquad \qquad \leq c ( \norm{\nabla u}_{\dot{H}^{n/2}} \norm{\BB}_{\dot{H}^{n/2}} + \norm{u}_{\dot{H}^{n/2}} \norm{\nabla \BB}_{\dot{H}^{n/2}} ).
\end{align*}
The counterexample in the appendix to Fefferman et al.\ (2014) shows that one cannot remove the second term on the right-hand side, at least in the case $n=2$.

We will use this estimate in the form of the following corollary, which provides a partial generalisation of Lemma~1.1 from Chemin (1992).

\begin{corollary}\label{wwan}
 Take $s\ge 1$ and $s_{1}, s_{2} > 0$ such that
  $$
  1 \leq s_{1} < \tfrac{n}{2} + 1 \qquad \text{and} \qquad s_{1} + s_{2} = s + \tfrac{n}{2} + 1.
  $$
  Then there exists a constant $c$ such that for all $u, v \in \dot{H}^{s_{1}}(\R^{n}) \cap \dot{H}^{s_{2}}(\R^{n})$ with $\Div u = 0$,
\[
\abs{\inner{\Lambda^{s}[(u \cdot \nabla)v]}{\Lambda^{s}v}} \leq c ( \norm{u}_{\dot{H}^{s_{1}}} \norm{v}_{\dot{H}^{s_{2}}} + \norm{u}_{\dot{H}^{s_{2}}} \norm{v}_{\dot{H}^{s_{1}}} ) \norm{v}_{\dot{H}^{s}}.
\]
\end{corollary}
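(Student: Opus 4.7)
The plan is to reduce the statement to Proposition~\ref{prop:NSInequalityNorm} by the standard commutator trick, with the divergence-free assumption used to annihilate the non-commutator piece. First I would write
\[
\Lambda^{s}[(u\cdot\nabla)v] \;=\; \bigl(\Lambda^{s}[(u\cdot\nabla)v] - (u\cdot\nabla)\Lambda^{s}v\bigr) \;+\; (u\cdot\nabla)\Lambda^{s}v,
\]
and take the $L^{2}$ inner product of both sides with $\Lambda^{s}v$. The second piece produces
\[
\inner{(u\cdot\nabla)\Lambda^{s}v}{\Lambda^{s}v} \;=\; \int_{\R^{n}} u\cdot\nabla\!\left(\tfrac{1}{2}|\Lambda^{s}v|^{2}\right)\d x \;=\; -\tfrac{1}{2}\int_{\R^{n}}(\Div u)\,|\Lambda^{s}v|^{2}\,\d x \;=\; 0,
\]
using $\Div u=0$ and integrating by parts. (The regularity needed to justify this integration by parts is exactly $u,v\in\dot H^{s_{1}}\cap\dot H^{s_{2}}$, which by the constraint $s_{1}+s_{2}=s+n/2+1$ and Sobolev embedding yields enough decay at infinity.)

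Next I would bound the remaining commutator contribution by Cauchy--Schwarz:
\[
\bigl|\inner{\Lambda^{s}[(u\cdot\nabla)v]-(u\cdot\nabla)\Lambda^{s}v}{\Lambda^{s}v}\bigr| \;\leq\; \bignorm{\Lambda^{s}[(u\cdot\nabla)v]-(u\cdot\nabla)\Lambda^{s}v}_{L^{2}}\,\bignorm{\Lambda^{s}v}_{L^{2}}.
\]
Applying Proposition~\ref{prop:NSInequalityNorm} with $\BB=v$ to the first factor gives
\[
\bignorm{\Lambda^{s}[(u\cdot\nabla)v]-(u\cdot\nabla)\Lambda^{s}v}_{L^{2}} \;\leq\; c\bigl(\norm{u}_{\dot H^{s_{1}}}\norm{v}_{\dot H^{s_{2}}} + \norm{u}_{\dot H^{s_{2}}}\norm{v}_{\dot H^{s_{1}}}\bigr),
\]
while $\norm{\Lambda^{s}v}_{L^{2}}=\norm{v}_{\dot H^{s}}$. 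Combining the two estimates completes the proof.

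There is no real obstacle here — all the work has already been done in Proposition~\ref{prop:NSInequalityNorm}. The only point to watch is that the hypotheses of the corollary ($s\geq 1$, $1\leq s_{1}<n/2+1$, $s_{1}+s_{2}=s+n/2+1$) are exactly those of the proposition, so no additional conditions are imposed by the reduction; the divergence-free assumption is used solely to eliminate the transport term and plays no role in estimating the commutator.
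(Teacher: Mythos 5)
Your argument is correct and is essentially identical to the paper's proof: both use $\Div u=0$ to kill the transport term $\inner{(u\cdot\nabla)\Lambda^{s}v}{\Lambda^{s}v}$, then apply Cauchy--Schwarz and Proposition~\ref{prop:NSInequalityNorm} to the remaining commutator. No issues.
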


\begin{proof}
  Observe that since
  $$
  \inner{(u \cdot \nabla)\Lambda^{s}v}{\Lambda^{s}v}=0
  $$
  it follows that
  $$
  \inner{\Lambda^{s}[(u \cdot \nabla)v]}{\Lambda^{s}v}=\inner{\Lambda^s[(u\cdot\nabla)v]-(u\cdot\nabla)\Lambda^sv}{\Lambda^sv}
  $$
  and the inequality is an immediate consequence of Proposition \ref{prop:NSInequalityNorm}.
\end{proof}

Note that in particular for any $s\ge1$, if $\nabla\cdot u=0$ then
%\[
%\abs{\inner{\Lambda^{s}[(u \cdot \nabla)v]}{\Lambda^{s}v}} \leq c ( \norm{u}_{\dot{H}^s} \norm{v}_{\dot{H}^{s+1}} + \norm{u}_{\dot{H}^{s+1}} \norm{v}_{\dot{H}^{s}} ) \norm{v}_{\dot{H}^{3/2}};
%\]
%when $u=v$ and $s=3/2$ this becomes
\begin{equation}\label{whatwewant}
\abs{\inner{\Lambda^s[(u \cdot \nabla)u]}{\Lambda^su}} \leq c \|u\|_{\dot{H}^s}\|u\|_{\dot H^{s+1}} \|u\|_{\dot{H}^{3/2}}
\end{equation}
whenever the right-hand side is finite.

\section{Bounds for the nonlinear term in Besov spaces}\label{sec:B}

%\label{sec:Besov}

%\subsection{Embeddings}

Much like the Sobolev embeddings, Besov spaces enjoy certain embeddings with the correct exponents. We quote the two embeddings we will use most frequently.

\begin{proposition}[Proposition~2.20 in Bahouri et al (2011)]
\label{prop:BesovEmbedding}
Let $1 \leq p_{1} \leq p_{2} \leq \infty$ and $1 \leq r_{1} \leq r_{2} \leq \infty$. For any real number $s$, we have the continuous embedding
\[
\dot{B}^{s}_{p_{1}, r_{1}}(\R^{n}) \hookrightarrow \dot{B}^{s - n (1/p_{1} - 1/p_{2})}_{p_{2}, r_{2}}(\R^{n}).
\]
\end{proposition}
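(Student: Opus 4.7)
The plan is to reduce the statement to Bernstein's inequality (Lemma~\ref{lem:Bernstein}) applied block-by-block, combined with the elementary monotonicity of $\ell^{r}$ norms. The argument is essentially direct bookkeeping: once one matches the exponents correctly, no new analytical tool beyond what is already in the paper is required.

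First I would fix $u \in \dot B^s_{p_1,r_1}(\R^n)$ and apply Lemma~\ref{lem:Bernstein} to each homogeneous dyadic block $\loc_j u$, whose Fourier support lies in the annulus of radii comparable to $2^j$, with $p=p_1$ and $q=p_2$. This yields
\[
\|\loc_j u\|_{L^{p_2}} \leq c\, 2^{jn(1/p_1 - 1/p_2)} \|\loc_j u\|_{L^{p_1}}.
\]
Multiplying both sides by $2^{j(s - n(1/p_1 - 1/p_2))}$ produces
\[
2^{j(s - n(1/p_1 - 1/p_2))} \|\loc_j u\|_{L^{p_2}} \leq c\, 2^{js} \|\loc_j u\|_{L^{p_1}}.
\]

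The second step is to take the $\ell^{r_2}$ norm in $j \in \Z$ of both sides. By the very definition of the homogeneous Besov norm, the left-hand side becomes $\|u\|_{\dot B^{s - n(1/p_1 - 1/p_2)}_{p_2,r_2}}$. For the right-hand side I would invoke the standard embedding $\ell^{r_1} \hookrightarrow \ell^{r_2}$ (which holds with constant $1$ whenever $r_1 \leq r_2$, including the case $r_2 = \infty$) to dominate the $\ell^{r_2}$ norm by the $\ell^{r_1}$ norm, producing exactly $c\|u\|_{\dot B^s_{p_1,r_1}}$. Combining the two inequalities gives the desired continuous embedding.

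There is no substantial obstacle here; the result really is a bookkeeping exercise once Bernstein's inequality is available. The only mild subtlety is to confirm that the image distribution still lies in $\Sch'_{h}(\R^n)$ so that the target Besov norm is well defined, but this is automatic: each $\loc_j u$ is already a smooth $L^{p_2}$ function by Bernstein, and the hypothesis $u \in \Sch'_h$ guarantees that the low-frequency part $\dot S_j u$ tends to zero in the required sense, an observation that passes unchanged to the $p_2$ setting.
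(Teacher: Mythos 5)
Your proposal is correct. The paper quotes this embedding from Bahouri et al.\ (2011) without reproducing a proof, and your argument --- Bernstein's inequality (Lemma~\ref{lem:Bernstein}) applied to each dyadic block, followed by the monotonicity $\|\cdot\|_{\ell^{r_2}}\le\|\cdot\|_{\ell^{r_1}}$ for $r_1\le r_2$ --- is precisely the standard proof given in that reference, with the exponents matched correctly; the remark about $\Sch'_h(\R^n)$ is also handled appropriately, since membership in $\Sch'_h$ is a property of the distribution $u$ itself and does not depend on the target space.
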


\begin{proposition}[Proposition~2.39 in Bahouri et al (2011)]
\label{prop:BesovLebesgueEmbedding}
For $1 \leq p \leq q \leq \infty$, we have the continuous embedding
\[
\dot{B}^{n/p-n/q}_{p,1}(\R^{n}) \hookrightarrow L^{q}(\R^{n}).
\]
\end{proposition}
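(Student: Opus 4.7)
The natural route is the Littlewood--Paley decomposition: estimate each dyadic block in $L^q$ via Bernstein's inequality, then sum. Set $\sigma := n/p - n/q \ge 0$ and let $u \in \dot{B}^{\sigma}_{p,1}(\R^n) \subset \Sch'_h(\R^n)$. By Lemma~\ref{lem:Bernstein}, applied to each $\loc_j u$ whose Fourier transform is supported in an annulus of scale $2^j$,
\[
\|\loc_j u\|_{L^q} \le c\, 2^{jn(1/p-1/q)} \|\loc_j u\|_{L^p} = c\, 2^{j\sigma} \|\loc_j u\|_{L^p},
\]
so summing over $j \in \Z$ and invoking the $\ell^1$ structure built into the definition of $\dot B^\sigma_{p,1}$ yields
\[
\sum_{j \in \Z} \|\loc_j u\|_{L^q} \le c \sum_{j \in \Z} 2^{j\sigma} \|\loc_j u\|_{L^p} = c\|u\|_{\dot{B}^{\sigma}_{p,1}}.
\]

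For $q < \infty$ this shows that the series $\sum_j \loc_j u$ converges absolutely in $L^q$ to some $v$ with $\|v\|_{L^q} \le c\|u\|_{\dot{B}^{\sigma}_{p,1}}$. For $q = \infty$, applying the bound above to any finite partial sum together with the triangle inequality gives the same conclusion for $\|v\|_{L^\infty}$. Either way, the quantitative estimate that will give the embedding is in hand.

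The remaining and only truly nontrivial task is to identify $v$ with $u$ as tempered distributions. Absolute $L^q$ convergence gives $\dot{S}_N u - \dot{S}_{-M} u = \sum_{-M \le j < N} \loc_j u \to v$ in $\Sch'$ as $M,N \to \infty$. On the other hand, the very definition of $\Sch'_h$ forces $\dot{S}_N u \to u$ in $\Sch'_h$ as $N \to \infty$, while applying the condition $\lim_{\lambda \to \infty}\|M_{\theta(\lambda\,\cdot)}u\|_{L^\infty} = 0$ with $\theta = \chi$ shows that $\dot{S}_{-M} u \to 0$ in $L^\infty$, and hence in $\Sch'$, as $M \to \infty$. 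Passing to the joint limit identifies $v = u$.

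\textbf{Main obstacle.} The Bernstein step and the summation are entirely routine; the genuine subtlety is the identification $v = u$, which depends crucially on the $\Sch'_h$ framework. Without that hypothesis the homogeneous Littlewood--Paley decomposition only recovers $u$ modulo polynomials (which every $\loc_j$ annihilates), and the stated embedding would fail for polynomial-like distributions. The $\Sch'_h$ condition built into the definition of homogeneous Besov spaces exists precisely to kill this ambiguity and make the argument close.
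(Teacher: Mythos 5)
Your proof is correct and is essentially the standard argument (Bernstein's inequality on each dyadic block, $\ell^1$ summation, then identification of the limit with $u$ via the $\Sch'_h$ hypothesis), which is exactly how Proposition~2.39 is proved in Bahouri, Chemin and Danchin. The paper itself states this result as a citation and gives no proof, so there is nothing further to compare; your handling of the low-frequency tail $\dot S_{-M}u \to 0$ via the definition of $\Sch'_h$ is the one genuinely delicate point, and you have it right.
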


%Note that the homogeneous Besov space $\dot{B}^{s}_{p,r}(\R^{n})$ is a Banach space if, and only if, either $s < n/p$, or $s = n/p$ and $r = 1$ (in contrast to its inhomogeneous counterpart). Indeed, it is the case $\dot{B}^{n/p}_{p,1}$ that most interests us, especially when $p=2$, for three reasons: it is a Banach space, it embeds continuously in $L^{\infty}(\R^{n})$ by Proposition~\ref{prop:BesovLebesgueEmbedding}, and it is a Banach algebra. The last fact follows from Bony's paraproduct decomposition, which we outline now.

\subsection{Homogeneous Paradifferential Calculus}

Let $u$ and $v$ be tempered distributions in $\Sch'_{h}(\R^{n})$. We have
\[
u = \sum_{j' \in \Z} \loc_{j'} u \quad \text{and} \quad v = \sum_{j \in \Z} \loc_{j} v,
\]
so, at least formally,
\[
uv = \sum_{j, j' \in \Z} \loc_{j'} u \loc_{j} v.
\]
One of the key techniques of paradifferential calculus is to break the above sum into three parts, as follows: define
\[
\dot{T}_{u} v := \sum_{j \in \Z} \dot{S}_{j-1} u \loc_{j} v,
\]
and
\[
\dot{R}(u,v) := \sum_{|k-j| \leq 1} \loc_{k} u \loc_{j} v.
\]
At least formally, the following \emph{Bony decomposition} holds true:
\[
uv = \dot{T}_{u} v + \dot{T}_{v} u + \dot{R}(u,v).
\]
We now state two standard estimates on $\dot{T}$ and $\dot{R}$ that we will use in proving our a priori estimates.

\begin{lemma}[Theorem 2.47 from Bahouri et al (2011)]
\label{lem:BonyTEstimate}
% There exists a constant $C$ such that for any real number $s$ and any $p, r \in [1, \infty]$ we have, for any $u \in L^{\infty}$ and $v \in \dot{B}^{s}_{p,r}$,
% \[
% \norm{\dot{T}_{u} v}_{\dot{B}^{s}_{p,r}} \leq C^{1 + |s|} \norm{u}_{L^{\infty}} \norm{v}_{\dot{B}^{s}_{p,r}}.
% \]
% Moreover, for any $s \in \R$ and $t < 0$ and any $p, r_{1}, r_{2} \in [1,\infty]$ we have, for any $u \in \dot{B}^{t}_{p,r_{1}}$ and $v \in \dot{B}^{s}_{p,r_{2}}$,
Let $s \in \R$ and $t < 0$. There exists a constant $C = C(s,t)$ such that for any $p, r_{1}, r_{2} \in [1,\infty]$, $u \in \dot{B}^{t}_{p,r_{1}}$ and $v \in \dot{B}^{s}_{p,r_{2}}$,
\[
%\norm{\dot{T}_{u} v}_{\dot{B}^{s+t}_{p,r}} \leq \frac{C^{1 + |s+t|}}{-t} \norm{u}_{\dot{B}^{t}_{\infty,r_{1}}} \norm{v}_{\dot{B}^{s}_{p,r_{2}}}
\norm{\dot{T}_{u} v}_{\dot{B}^{s+t}_{p,r}} \leq C \norm{u}_{\dot{B}^{t}_{\infty,r_{1}}} \norm{v}_{\dot{B}^{s}_{p,r_{2}}}
\]
with $\frac{1}{r} = \min \left\{ 1, \frac{1}{r_{1}} + \frac{1}{r_{2}} \right\}$.
\end{lemma}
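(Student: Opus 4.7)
The plan is to estimate each homogeneous dyadic block of $\dot{T}_{u} v$ using the frequency localisation inherent in the paraproduct, and then sum in $\ell^r$. First I would invoke the standard spectral observation: since $\widehat{\dot{S}_{j-1} u}$ is supported in a ball of radius $\lesssim 2^{j-1}$ while $\widehat{\loc_{j} v}$ lives in the annulus $\{|\xi| \sim 2^{j}\}$, the Fourier transform of the product $\dot{S}_{j-1} u \cdot \loc_{j} v$ is supported in an annulus of radius $\sim 2^{j}$. Consequently there exists a fixed integer $N_{0}$ (depending only on the choice of $\varphi$) such that $\loc_{k} (\dot{S}_{j-1} u \cdot \loc_{j} v) = 0$ whenever $|k-j| > N_{0}$, and hence
\[
\norm{\loc_{k} \dot{T}_{u} v}_{L^{p}} \leq c \sum_{|j-k| \leq N_{0}} \norm{\dot{S}_{j-1} u}_{L^{\infty}} \norm{\loc_{j} v}_{L^{p}}.
\]

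Next I would bound the low-frequency piece $\dot{S}_{j-1} u = \sum_{j' \leq j-2} \loc_{j'} u$. By (\ref{eqn:StandardTrick}) one has $\norm{\loc_{j'} u}_{L^{\infty}} \leq 2^{-j' t} c_{j'} \norm{u}_{\dot{B}^{t}_{\infty, r_{1}}}$ with $(c_{j'}) \in \ell^{r_{1}}$ of norm $\leq 1$. This is where the hypothesis $t < 0$ becomes essential: writing $j' = j-k$ with $k \geq 2$, the weight $2^{(j-j')t} = 2^{kt}$ is summable in $k$, so a discrete Young's inequality (convolving $(c_{j'})$ with the $\ell^{1}$ sequence $(2^{kt})_{k \geq 2}$) produces
\[
\norm{\dot{S}_{j-1} u}_{L^{\infty}} \leq 2^{-jt} d_{j} \norm{u}_{\dot{B}^{t}_{\infty, r_{1}}}, \qquad (d_{j})_{j \in \Z} \in \ell^{r_{1}},
\]
with $\norm{(d_{j})}_{\ell^{r_{1}}}$ bounded by a constant $C(t)$. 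The standard Besov characterisation of $v$ simultaneously yields $\norm{\loc_{j} v}_{L^{p}} \leq 2^{-js} e_{j} \norm{v}_{\dot{B}^{s}_{p,r_{2}}}$ with $(e_{j}) \in \ell^{r_{2}}$ of norm $\leq 1$.

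Substituting both sequences into the estimate for $\norm{\loc_{k} \dot{T}_{u} v}_{L^{p}}$, extracting the factor $2^{-k(s+t)}$, and absorbing the uniformly bounded factor $2^{(k-j)(s+t)}$ (valid because $|k-j| \leq N_{0}$), one is left with
\[
2^{k(s+t)} \norm{\loc_{k} \dot{T}_{u} v}_{L^{p}} \leq C \norm{u}_{\dot{B}^{t}_{\infty, r_{1}}} \norm{v}_{\dot{B}^{s}_{p, r_{2}}} \sum_{|m| \leq N_{0}} d_{k+m} e_{k+m}.
\]
Finally I would take the $\ell^{r}$ norm in $k$ and use the elementary fact that $\norm{d \cdot e}_{\ell^{r}} \leq \norm{d}_{\ell^{r_{1}}} \norm{e}_{\ell^{r_{2}}}$ whenever $\frac{1}{r} = \min\{ 1, \frac{1}{r_{1}} + \frac{1}{r_{2}} \}$: the case $\frac{1}{r_{1}} + \frac{1}{r_{2}} \leq 1$ is direct H\"{o}lder, while $\frac{1}{r_{1}} + \frac{1}{r_{2}} > 1$ reduces to the previous one by the embeddings $\ell^{r_{1}} \hookrightarrow \ell^{a}$ and $\ell^{r_{2}} \hookrightarrow \ell^{b}$ for any $a \geq r_{1}$, $b \geq r_{2}$ with $\frac{1}{a} + \frac{1}{b} = 1$. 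Translation invariance of the $\ell^{r}$ norm absorbs the finite inner sum $\sum_{|m| \leq N_{0}}$ into the constant, yielding the claimed bound.

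The only step requiring care is the second one, namely the bookkeeping needed to verify that the convolution against the summable sequence $(2^{kt})_{k \geq 2}$ preserves $\ell^{r_{1}}$ with a constant depending only on $t$; the remaining ingredients are routine applications of Bernstein-type localisation, H\"{o}lder, and Young's inequality for sums.
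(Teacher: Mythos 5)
The paper does not prove this lemma at all---it is quoted verbatim as Theorem~2.47 of Bahouri, Chemin, and Danchin (2011)---so there is no in-paper argument to compare against. Your proof is correct and is essentially the standard one from that reference: spectral localisation of $\dot S_{j-1}u\,\loc_j v$ in an annulus of radius $\sim 2^j$, the $\ell^1$-convolution bound on $\|\dot S_{j-1}u\|_{L^\infty}$ (which is exactly where $t<0$ enters), and H\"older/embedding in the sequence indices to produce $\ell^r$ with $\tfrac1r=\min\{1,\tfrac1{r_1}+\tfrac1{r_2}\}$. The only blemish is notational: you reuse $k$ both as the dyadic block index and as the shift $j-j'$ in the convolution step, which should be renamed.
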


\begin{lemma}[Theorem 2.52 from Bahouri et al (2011)]
\label{lem:BonyREstimate}
%There exists a constant $C$ such that, if $s_{1}$ and $s_{2}$ are any real numbers with $s_{1} + s_{2} > 0$, and $p_{1}, p_{2}, r_{1}, r_{2} \in [1,\infty]$, then for any $u \in \dot{B}^{s_{1}}_{p_{1},r_{1}}$ and $v \in \dot{B}^{s_{2}}_{p_{2},r_{2}}$,
Let $s_{1}, s_{2} \in \R$ such that $s_{1} + s_{2} > 0$. There exists a constant $C = C(s_{1}, s_{2})$ such that, for any $p_{1}, p_{2}, r_{1}, r_{2} \in [1,\infty]$, $u \in \dot{B}^{s_{1}}_{p_{1},r_{1}}$ and $v \in \dot{B}^{s_{2}}_{p_{2},r_{2}}$,
\[
%\norm{\dot{R}(u,v)}_{\dot{B}^{s_{1}+s_{2}}_{p,r}} \leq \frac{C^{1 + |s_{1} + s_{2}|}}{s_{1} + s_{2}} \norm{u}_{\dot{B}^{s_{1}}_{p_{1},r_{1}}} \norm{v}_{\dot{B}^{s_{2}}_{p_{2},r_{2}}}
\norm{\dot{R}(u,v)}_{\dot{B}^{s_{1}+s_{2}}_{p,r}} \leq C \norm{u}_{\dot{B}^{s_{1}}_{p_{1},r_{1}}} \norm{v}_{\dot{B}^{s_{2}}_{p_{2},r_{2}}}
\]
provided that
\[
\frac{1}{p} := \frac{1}{p_{1}} + \frac{1}{p_{2}} \leq 1 \quad \text{and} \quad \frac{1}{r} := \frac{1}{r_{1}} + \frac{1}{r_{2}} \leq 1.
\]
% When $r=1$ and $s_{1} + s_{2} \geq 0$, we have, for any $u \in \dot{B}^{s_{1}}_{p_{1},r_{1}}$ and $v \in \dot{B}^{s_{2}}_{p_{2},r_{2}}$,
% \[
% \norm{\dot{R}(u,v)}_{\dot{B}^{s_{1}+s_{2}}_{p,\infty}} \leq C^{1 + |s_{1} + s_{2}|} \norm{u}_{\dot{B}^{s_{1}}_{p_{1},r_{1}}} \norm{v}_{\dot{B}^{s_{2}}_{p_{2},r_{2}}}.
% \]
\end{lemma}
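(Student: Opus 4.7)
The plan is to bound each dyadic block $\norm{\loc_q \dot R(u,v)}_{L^p}$ and then take the weighted $\ell^r$ norm in $q$. First I would group the remainder as $R_j := \sum_{|i|\leq 1} \loc_{j+i} u \, \loc_j v$, so that $\dot R(u,v) = \sum_{j\in\Z} R_j$. The crucial observation, which distinguishes the remainder from the paraproduct, is that each $R_j$ is spectrally supported in a \emph{ball} $B(0, C 2^j)$ rather than in an annulus: the Fourier supports of $\loc_{j+i} u$ and $\loc_j v$ are annuli of comparable radius $\sim 2^j$, and their Minkowski sum is contained in such a ball. Consequently there is an integer $N_0$ depending only on $\varphi$ such that $\loc_q R_j = 0$ whenever $j < q - N_0$, so only the tail $j \geq q - N_0$ contributes to $\loc_q \dot R(u,v)$.

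Next I would apply H\"older's inequality in $L^p$, using $1/p = 1/p_1 + 1/p_2$ and the uniform boundedness of $\loc_q$ on $L^p$, to obtain
\[
\norm{\loc_q R_j}_{L^p} \leq C \sum_{|i|\leq 1} \norm{\loc_{j+i} u}_{L^{p_1}} \norm{\loc_j v}_{L^{p_2}}.
\]
By the characterisation (\ref{eqn:StandardTrick}) there are nonnegative sequences $(c_j), (d_j)$ with $\norm{(c_j)}_{\ell^{r_1}} = \norm{(d_j)}_{\ell^{r_2}} = 1$ satisfying $\norm{\loc_j u}_{L^{p_1}} \leq c_j 2^{-j s_1} \norm{u}_{\dot B^{s_1}_{p_1,r_1}}$ and $\norm{\loc_j v}_{L^{p_2}} \leq d_j 2^{-j s_2} \norm{v}_{\dot B^{s_2}_{p_2,r_2}}$. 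Absorbing the three-term $i$-shift into the constants and relabelling the sequences yields
\[
2^{q(s_1+s_2)} \norm{\loc_q \dot R(u,v)}_{L^p} \leq C \sum_{j \geq q - N_0} 2^{(q-j)(s_1+s_2)} c_j d_j \, \norm{u}_{\dot B^{s_1}_{p_1,r_1}} \norm{v}_{\dot B^{s_2}_{p_2,r_2}}.
\]

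The right-hand side has the form of a discrete convolution $(k \ast (cd))_q$ with kernel $k(m) := 2^{m(s_1+s_2)} \dInd_{\{m \leq N_0\}}$. The hypothesis $s_1 + s_2 > 0$ is exactly what makes $k \in \ell^1(\Z)$, with $\norm{k}_{\ell^1} = \sum_{m \leq N_0} 2^{m(s_1+s_2)} < \infty$. Taking $\ell^r$ norms in $q$ and applying discrete Young's inequality followed by H\"older for sequences (both permissible since $1/r = 1/r_1 + 1/r_2 \leq 1$) gives
\[
\bignorm{\bigl(2^{q(s_1+s_2)} \norm{\loc_q \dot R(u,v)}_{L^p}\bigr)_{q\in\Z}}_{\ell^r} \leq C \norm{k}_{\ell^1} \norm{(c_j)}_{\ell^{r_1}} \norm{(d_j)}_{\ell^{r_2}} \norm{u}_{\dot B^{s_1}_{p_1,r_1}} \norm{v}_{\dot B^{s_2}_{p_2,r_2}},
\]
which is the desired estimate.

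The main obstacle will be the careful spectral bookkeeping that justifies restricting the $j$-sum to $j \geq q - N_0$, because it is this one-sided support that couples with the sign condition $s_1 + s_2 > 0$ to produce an $\ell^1$ kernel. In the paraproduct estimate one instead has $|q-j|$ bounded and no sign condition is needed; here it is the low-frequency end of the ball $B(0, C 2^j)$ that forces the hypothesis $s_1 + s_2 > 0$. Once the spectral support claim is in hand, the remainder of the argument is routine paradifferential bookkeeping combined with Young's inequality.
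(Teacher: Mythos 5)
The paper does not prove this lemma: it is quoted verbatim as Theorem~2.52 of Bahouri, Chemin \& Danchin (2011), and your argument is a correct reconstruction of the standard proof given there --- grouping the remainder into blocks $R_j$ spectrally supported in balls $B(0,C2^j)$, so that only $j\ge q-N_0$ contributes to $\loc_q\dot R(u,v)$, and then combining H\"older with discrete Young's inequality for the $\ell^1$ kernel $2^{m(s_1+s_2)}\dInd_{\{m\le N_0\}}$, which is summable precisely because $s_1+s_2>0$. The only step left implicit is the verification that $\sum_j R_j$ actually converges in $\Sch'_h(\R^n)$ so that $\loc_q$ may be applied term by term (this is the content of the ball-support characterisation of Besov spaces, Lemma~2.49 in the same reference, and again uses $s_1+s_2>0$); with that remark added, your proof is complete and follows essentially the same route as the cited source.
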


%From Lemmas~\ref{lem:BonyTEstimate} and \ref{lem:BonyREstimate} it is straightforward to prove that, if $s > 0$ and $p, r \in [1, \infty]$ such that either $s < n/p$, or $s = n/p$ and $r=1$, then there is a constant $C$ depending only on $s$ and the dimension $n$ such that
%\[
%%\norm{uv}_{\dot{B}^{s}_{p,r}} \leq \frac{C^{1 + s}}{s} \left( \norm{u}_{L^{\infty}} \norm{v}_{\dot{B}^{s}_{p,r}} + \norm{u}_{\dot{B}^{s}_{p,r}} \norm{v}_{L^{\infty}} \right).
%\norm{uv}_{\dot{B}^{s}_{p,r}} \leq C \left( \norm{u}_{L^{\infty}} \norm{v}_{\dot{B}^{s}_{p,r}} + \norm{u}_{\dot{B}^{s}_{p,r}} \norm{v}_{L^{\infty}} \right).
%\]
%In particular, $L^{\infty} \cap \dot{B}^{s}_{p,r}$ is a Banach algebra. Moreover, as $\dot{B}^{n/p}_{p,1}$ embeds continuously in $L^{\infty}$ (by Proposition~\ref{prop:BesovLebesgueEmbedding}), we see that $\dot{B}^{n/p}_{p,1}$ is an algebra and
%\begin{equation}
%\label{eqn:BesovAlgebra}
%\norm{uv}_{\dot{B}^{n/p}_{p,1}} \leq c \norm{u}_{\dot{B}^{n/p}_{p,1}} \norm{v}_{\dot{B}^{n/p}_{p,1}}.
%\end{equation}

We also require the following Lemma, which is a particular case of Lemma~2.100 from Bahouri et al (2011).

\begin{lemma}
\label{lem:BesovCommutator}
Let $-1 - n/2 < \sigma < 1 + n/2$ and $1 \leq r \leq \infty$. Let $\vv$ be a divergence-free vector field on $\R^{n}$, and set $Q_{j} := [ (\vv \cdot \Grad), \loc_{j} ] f$. There exists a constant $C = C(\sigma, n)$, such that
\[
\bignorm{\left( 2^{j\sigma} \norm{Q_{j}}_{L^{2}} \right)_{j}}_{\ell^{r}} \leq C \norm{\Grad \vv}_{\dot{B}^{n/2}_{2, \infty} \cap L^{\infty}} \norm{f}_{\dot{B}^{\sigma}_{2,r}}.
\]
\end{lemma}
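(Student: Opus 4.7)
The plan is to decompose $Q_j$ via Bony's paradifferential calculus. Writing
\[
\vv^k\partial_k \ff = \dot T_{\vv^k}\partial_k \ff + \dot T_{\partial_k \ff}\vv^k + \dot R(\vv^k,\partial_k \ff),
\]
applying this identity to both $\loc_j(\vv\cdot\Grad\ff)$ and $(\vv\cdot\Grad)\loc_j\ff$, and subtracting, one obtains
\[
Q_j = [\dot T_{\vv^k},\loc_j]\partial_k \ff + \bigl(\dot T_{\partial_k \loc_j \ff}\vv^k - \loc_j \dot T_{\partial_k \ff}\vv^k\bigr) + \bigl(\dot R(\vv^k,\partial_k \loc_j \ff) - \loc_j \dot R(\vv^k,\partial_k \ff)\bigr).
\]
Each of the three pieces is to be estimated in $L^2$, and the resulting sequences summed in $\ell^r$ with the weight $2^{j\sigma}$.

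The key contribution is the commutator $[\dot T_{\vv^k},\loc_j]\partial_k \ff$. Spectral localisation restricts the defining sum of $\dot T_{\vv^k}$ to indices with $|j'-j|\le 4$, so it suffices to bound $[\dot S_{j'-1}\vv^k,\loc_j]\loc_{j'}\partial_k\ff$. Realising this as a convolution against the Littlewood--Paley kernel $h_j(z) = 2^{jn}h(2^j z)$,
\[
[\dot S_{j'-1}\vv^k,\loc_j]g(x) = \int h_j(x-y)\bigl(\dot S_{j'-1}\vv^k(y) - \dot S_{j'-1}\vv^k(x)\bigr)g(y)\,\d y,
\]
I would apply the mean value theorem to the difference of $\dot S_{j'-1}\vv^k$ and use $\int|z|\,|h_j(z)|\,\d z\le c\,2^{-j}$ together with Young's convolution inequality to obtain an $L^2$ bound of order $2^{-j}\|\Grad\vv\|_{L^\infty}\|\loc_{j'}\partial_k \ff\|_{L^2}$. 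Since $\|\loc_{j'}\partial_k\ff\|_{L^2}\le c\,2^{j'}\|\loc_{j'}\ff\|_{L^2}$ and $|j'-j|\le 4$, the identity (\ref{eqn:StandardTrick}) then yields the required $\ell^r$ contribution, controlled by $\|\Grad\vv\|_{L^\infty}\|\ff\|_{\dot B^\sigma_{2,r}}$.

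For the remaining paraproduct and remainder pieces I would apply Lemmas~\ref{lem:BonyTEstimate} and~\ref{lem:BonyREstimate} after tracking Fourier supports. For instance, $\dot T_{\partial_k\loc_j\ff}\vv^k$ only picks up $\loc_{j'}\vv^k$ with $j'\ge j+2$; using the Bernstein-type bound $\|\loc_{j'}\Grad\vv\|_{L^2}\le c\,2^{-j' n/2}\|\Grad\vv\|_{\dot B^{n/2}_{2,\infty}}$, the hypothesis $\sigma<1+n/2$ makes $\sum_{j'\ge j+2}2^{(j-j')(1+n/2-\sigma)}$ convergent, and the resulting sum is bounded by $\|\Grad\vv\|_{\dot B^{n/2}_{2,\infty}}\|\ff\|_{\dot B^\sigma_{2,r}}$. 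The remainder is handled symmetrically, with $\sigma>-1-n/2$ controlling the analogous low-frequency sum. The main obstacle is the careful bookkeeping of spectral supports and sequence convolutions in $j$ required for $\ell^r$ summability without logarithmic loss; since this result is a particular case of Lemma~2.100 in Bahouri et al.\ (2011), one may alternatively simply invoke that reference once the decomposition above has been recorded.
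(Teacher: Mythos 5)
The paper offers no proof of this lemma at all: it is quoted verbatim as a particular case of Lemma~2.100 of Bahouri, Chemin and Danchin (2011), so the paper's ``proof'' is the citation, and your closing sentence already covers that option. Your sketch is essentially the standard argument behind the cited result --- Bony decomposition of both $\loc_j(\vv\cdot\Grad\ff)$ and $(\vv\cdot\Grad)\loc_j\ff$, a first-order Taylor expansion against the kernel $2^{jn}h(2^j\cdot)$ for the true commutator term $[\dot T_{\vv^k},\loc_j]\partial_k\ff$, and paraproduct/remainder estimates for the rest --- and the commutator term is handled correctly.

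If, however, the sketch is meant to stand as a proof rather than as a pointer to the reference, there is one genuine gap: you never use the hypothesis $\Div\vv=0$, which is not decorative. It is needed for the remainder contribution $\loc_j\dot R(\vv^k,\partial_k\ff)$: summing over $k$ and using $\Div\vv=0$ one writes $\sum_k\dot R(\vv^k,\partial_k\ff)=\sum_k\partial_k\dot R(\vv^k,\ff)$, so the derivative falls outside the remainder and Lemma~\ref{lem:BonyREstimate} applies to $\dot R(\vv^k,\ff)$ at total regularity $(1+n/2)+\sigma>0$, which is exactly the lower bound $\sigma>-1-n/2$. Estimating $\dot R(\vv^k,\partial_k\ff)$ directly costs a derivative on $\ff$ and only reaches $\sigma>-n/2$, so ``handled symmetrically'' does not cover the stated range. (In Bahouri et al.\ the divergence-free condition is also what permits the endpoint value $\sigma=1+n/2$, which is the value the paper actually uses when invoking this lemma in Proposition~\ref{Besovestimate}.) A smaller bookkeeping slip: the restriction $\sigma<1+n/2$ is not what makes $\dot T_{\partial_k\loc_j\ff}\vv^k$ summable --- there the geometric sum has the $\sigma$-independent ratio $2^{-(1+n/2)}$ --- it is needed for the companion term $\loc_j\dot T_{\partial_k\ff}\vv^k$, where $\norm{\dot S_{j'-1}\partial_k\ff}_{L^\infty}$ must be summed against the Besov regularity of $\ff$ and one needs $1+n/2-\sigma>0$ to avoid a divergent low-frequency sum.
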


%\newpage

%%%%%%%%%%%%%%%%%%%%%%%%%%%%%%%%%%%%%%%%%%%%%%%%%%%%%%%%%%%%%%%%%%%%%%%%%%
%%%%%%%%%%%%%%%%%%%%%%%%%%%%%%%%%%%%%%%%%%%%%%%%%%%%%%%%%%%%%%%%%%%%%%%%%%
%%%%%%%%%%%%%%%%%%%%%%%%%%%%%%%%%%%%%%%%%%%%%%%%%%%%%%%%%%%%%%%%%%%%%%%%%%
%%%%%%%%%%%%%%%%%%%%%%%%%%%%%%%%%%%%%%%%%%%%%%%%%%%%%%%%%%%%%%%%%%%%%%%%%%
%%%%%%%%%%%%%%%%%%%%%%%%%%%%%%%%%%%%%%%%%%%%%%%%%%%%%%%%%%%%%%%%%%%%%%%%%%

%THIS STATEMENT IS MOST LIKELY WRONG. BUT I DO NOT THINK WE NEED IT...
%
%\begin{proposition}
  %If $u\in B^{d/2+1}_{2,1}$ then
%\begin{equation}\label{Besovone}
%\|\loc_k((u\cdot\nabla)u)\|_{\dot{B}^{d/2+1}_{2,1}}\lesssim\|u\|_{\dot{B}^{d/2+1}_{2,1}}^2.
%\end{equation}
%\end{proposition}
%
%I THINK IT SHOULD BE SOMETHING LIKE THIS

%\bigskip

\subsection{Main Estimate in Besov spaces}

We are now ready for the main estimate in Besov spaces.

\begin{proposition}\label{Besovestimate}
  If $u\in \dot B^{n/2+1}_{2,1}$ then
\begin{equation}\label{Besovone}
\left\|\loc_k((u\cdot\nabla)u) - \sum_i\dot{S}_{k-1}u_i\partial_{i}\loc_k u\right\|_{L^2}\lesssim d_k 2^{-k(n/2+1)}\|u\|_{\dot{B}^{n/2+1}_{2,1}}^2,
\end{equation}
with $\sum_k d_k =1$.
\end{proposition}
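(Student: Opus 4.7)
The plan is to rewrite the expression as a commutator plus a ``high-frequency residue''. Adding and subtracting $(u\cdot\nabla)\loc_k u$ and using $u-\dot S_{k-1}u = \sum_{l\geq k-1}\loc_l u$ gives
\[
\loc_k((u\cdot\nabla)u) - (\dot S_{k-1}u\cdot\nabla)\loc_k u = \sum_{l\geq k-1}(\loc_l u\cdot\nabla)\loc_k u \;-\; [u\cdot\nabla,\loc_k]u,
\]
and I would bound the two pieces separately, each contributing a summable sequence which, after renormalisation, combines into the claimed $d_k$.

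For the residue, each summand is controlled by $\|\loc_l u\|_{L^\infty}\|\nabla\loc_k u\|_{L^2}$. Bernstein (Lemma~\ref{lem:Bernstein}) together with \eqref{eqn:StandardTrick} gives
\[
\|\loc_l u\|_{L^\infty}\lesssim d_l\, 2^{-l}\|u\|_{\dot B^{n/2+1}_{2,1}},\qquad \|\nabla\loc_k u\|_{L^2}\lesssim d_k\, 2^{-kn/2}\|u\|_{\dot B^{n/2+1}_{2,1}},
\]
so each term carries the required factor $2^{-k(n/2+1)}$ with coefficient $d_k d_l\, 2^{-(l-k)}$. Setting $m=l-k\geq-1$ and interchanging summation, $\sum_k\sum_{l\geq k-1}d_k d_l\, 2^{-(l-k)} = \sum_{m\geq-1}2^{-m}\sum_k d_k d_{k+m}$; since $\sum_k d_k d_{k+m}\leq\sum_k d_k^2\leq 1$ uniformly in $m$, the total is bounded by a geometric series.

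For the commutator, I would apply Lemma~\ref{lem:BesovCommutator} with $\vv=f=u$, $\sigma=n/2+1$ and $r=1$. Both right-hand norms $\|\nabla u\|_{\dot B^{n/2}_{2,\infty}}$ and $\|\nabla u\|_{L^\infty}$ are controlled by $\|u\|_{\dot B^{n/2+1}_{2,1}}$: the Besov piece via the embedding $\dot B^{n/2}_{2,1}\hookrightarrow\dot B^{n/2}_{2,\infty}$ from Proposition~\ref{prop:BesovEmbedding}, and the $L^\infty$ piece via Proposition~\ref{prop:BesovLebesgueEmbedding} applied to $\nabla u\in\dot B^{n/2}_{2,1}$.

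The main obstacle is that Lemma~\ref{lem:BesovCommutator} as stated requires $\sigma<1+n/2$, strict, whereas we need exactly the endpoint $\sigma=n/2+1$. I would close this gap by reproving the commutator estimate at the endpoint directly from Bony's decomposition $u_i\partial_i u = \dot T_{u_i}\partial_i u + \dot T_{\partial_i u}u_i + \dot R(u_i,\partial_i u)$: the diagonal $j=k$ piece of $\loc_k\dot T_{u_i}\partial_i u$ produces the classical multiplication commutator $[\loc_k,\dot S_{k-1}u_i]\loc_k\partial_i u$, estimated by a Taylor expansion of $\dot S_{k-1}u_i$ that gains $2^{-k}\|\nabla u\|_{L^\infty}$; the second paraproduct is bounded using the uniform $L^\infty$-boundedness of $\dot S_{j-1}$ together with $\|\nabla u\|_{L^\infty}$; and in the remainder the divergence-free identity $\sum_i\dot R(u_i,\partial_i u) = \nabla\cdot\dot R(u_i,u)$ frees an extra derivative that is then absorbed via Bernstein. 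The $\ell^1$ hypothesis $r=1$ is precisely what provides the summability at the endpoint.
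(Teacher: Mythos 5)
Your argument is correct, but it is organised rather differently from the paper's. The paper applies Bony's decomposition to $(u\cdot\nabla)u$ at the outset, peels the subtracted term $\sum_i\dot{S}_{k-1}u_i\partial_i\loc_k u$ off the first paraproduct, and then estimates four pieces: the nearby-frequency difference \eqref{t1} directly, the paraproduct-commutator piece \eqref{t2} via Lemma~\ref{lem:BesovCommutator}, and the second paraproduct \eqref{t3} and remainder \eqref{t4} via the standard estimates of Lemmas~\ref{lem:BonyTEstimate} and~\ref{lem:BonyREstimate}. You instead isolate the full transport commutator $[u\cdot\nabla,\loc_k]u$ together with the high-frequency residue $\sum_{l\ge k-1}(\loc_l u\cdot\nabla)\loc_k u$; your residue estimate is clean and correct (in fact $\sum_{l\ge k-1}d_l 2^{-(l-k)}\le 2$ already gives the bound $\lesssim d_k 2^{-k(n/2+1)}\norm{u}_{\dot B^{n/2+1}_{2,1}}^2$ for each fixed $k$, with no need to re-sum), modulo the fact that it uses the derivative form of Bernstein's inequality rather than literally Lemma~\ref{lem:Bernstein}. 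Both routes then converge on the same crux: the commutator estimate is needed at the endpoint $\sigma=n/2+1$, which the quoted Lemma~\ref{lem:BesovCommutator} excludes. You are right to flag this, and it is to your credit --- note that the paper itself applies the lemma with weights $2^{k(n/2+1)}$, i.e.\ at this same excluded endpoint; the full Lemma~2.100 of Bahouri et al.\ does cover $\sigma=1+n/2$ precisely when $r=1$, and your sketched reproof via Bony's decomposition reproduces its mechanism, with the $\ell^1$ index supplying the summability needed for the $\dot{T}_{\partial_i u}u_i$ piece. One small caveat: your rewriting $\sum_i\dot{R}(u_i,\partial_i u)=\nabla\cdot\dot{R}(u_i,u)$ imports the hypothesis $\Div u=0$, which the proposition does not assume (it holds in the application, but is unnecessary here, since $s_1+s_2=n/2+1>0$ lets Lemma~\ref{lem:BonyREstimate} handle $\dot{R}(u_i,\partial_i u_l)$ directly, as the paper does for \eqref{t4}). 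What your organisation buys is a single appeal to a known commutator lemma plus an elementary residue bound; what the paper's buys is that it never needs the full commutator, only the paraproduct-commutator piece, with the remaining Bony terms dispatched by off-the-shelf paraproduct and remainder estimates.
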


\begin{proof}

%{\color{red}  REMOVE (AS WE HAVE LEMMAS)\\
%
%The following estimates are proved in Bahouri et al.:
  %\begin{equation}\label{Tuv}
  %\|T_uv\|_{\dot B_{p,r}}\lesssim \|u\|_{L^\infty}\|v\|_{B^s_{p,r}},\qquad s\in\R,\ p,q\in[1,\infty]
  %\end{equation}
  %(Theorem 2.47); if $s_1+s_2>0$ then
  %\begin{equation}\label{Ruv}
  %\|R(u,v)\|_{B^{s_1+s_2}_{p,r}}\lesssim \|u\|_{B^{s_1}_{p_1,r_1}}\|v\|_{B^{s_2}_{p_2,r_2}},
  %\end{equation}
  %where
  %$$
  %\frac{1}{p}=\frac{1}{p_1}+\frac{1}{p_2}\qquad\frac{1}{r}=\frac{1}{r_1}+\frac{1}{r_2}
  %$$
  %(Theorem 2.52); define $R_j=[(v\cdot\nabla),\triangle_j]f$, then if $\sigma>0$
%\begin{equation}\label{2jsRj}
%\big\|2^{j\sigma}\|R_j\|_{L^p}\big\|_{\ell^r}\lesssim\|\nabla v\|_{L^\infty}\|f\|_{B^{\sigma}_{p,r}}+\|\nabla f\|_{L^{p_2}}\|\nabla v\|_{B^{\sigma-1}_{p,r}},
%\end{equation}
%where
%$$
%\frac{1}{p_2}=\frac{1}{p}-\frac{1}{p_1}
%$$
%(Lemma 2.100, (2.54)). }

Notice that the $l$-th coordinate of $(u\cdot\nabla)u$ is given by $\sum_i u_i \partial_{i} u_l$, and so we have
$$
(u\cdot \nabla u)_l = \sum_i \dot{T}_{u_i}\partial_{i} u_l+ \sum_i \dot{T}_{\partial_{i} u_l}u_i +\sum_i\dot{R}(u_i,\partial_{i} u_l)
$$
%Now,
%$$
%\loc_k((u\cdot\nabla)u)=\loc_k\dot{T}_u\nabla u+\loc_k\dot{T}_{\nabla u}u+\loc_k\dot{R}(u,\nabla u).
%$$
%We bound each term in turn.
Recall that by definition
$$
\dot{T}_{u_i}\partial_{i} u_l=\sum_j \dot{S}_{j-1} u_i \loc_j \partial_{i}u_l,
$$
and so we can rewrite $\loc_k\dot{T}_u\nabla u_l$ as follows
\begin{align}
  \sum_i\loc_k\dot{T}_{u_i}\partial_{i} u_l&=\sum_i\dot{S}_{k-1}u_i\partial_{i}\loc_k u_l\\
  &+\sum_i\sum_j(\dot{S}_{j-1}u_i-\dot{S}_{k-1}u_i)\partial_{i}\loc_k\loc_ju_l\\
  &+\sum_i\sum_j[\loc_k,\dot{S}_{j-1}u_i\partial_{i}]\loc_ju_l.
\end{align}

And so we obtain the following expression for the $l$-th component of the term we want to estimate
\begin{align}
\Big(\loc_k((u\cdot\nabla)u) &- \sum_i\dot{S}_{k-1}u_i\partial_{i}\loc_k u \Big)_l= \nonumber \\
&=\sum_i\sum_j(\dot{S}_{j-1}u_i-\dot{S}_{k-1}u_i)\partial_{i}\loc_k\loc_ju_l \label{t1}\\
  &+\sum_i\sum_j[\loc_k,\dot{S}_{j-1}u_i\partial_{i}]\loc_ju_l \label{t2} \\
	& +\sum_i \loc_k \dot{T}_{\partial_{i} u_l}u_i \label{t3}\\
	&+ \sum_i \loc_k\dot{R}(u_i,\partial_{i} u_l) \label{t4}
\end{align}

We will show that $L^2$ norm of each of the four terms in the right hand side is controlled by a constant multiple of $d_k 2^{-k(n/2+1)}\|u\|_{\dot{B}^{n/2+1}_{2,1}}^2$, hence obtaining the result.

%The term in (\ref{t1}), corresponds (modulo the summation in $i$) to term subtracted in the LHS of \ref{Besovone}.
%will vanish when paired with $\triangle_ku_l$. \\
For $(\ref{t1}$), ignoring the summation in $i$ for now we have
$$
\sum_j(\dot{S}_{j-1}u_i-\dot{S}_{k-1}u_i)\partial_{i}\loc_k\loc_ju_l=\loc_{k-1}u_i\loc_k\loc_{k+1}\partial_{i} u_l- \loc_{k-2}u_i\loc_k\loc_{k-1}\partial_{i} u_l,
$$
and so (now summing in $i$ as well)
\begin{align*}
\|\mbox{expression (\ref{t1})}\|_{L^2}&\lesssim 2^k\|\loc_{k-1}u\|_{L^\infty}\|\loc_ku_l\|_{L^2}\\
&\qquad+2^k\|\loc_{k-2}u\|_{L^\infty}\|\loc_ku_l\|_{L^2}\\
& \lesssim \|\loc_k u_l\|_{L^2} \|u\|_{\dot{B}^{n/2+1}_{2,1}} \\
& \lesssim d_k 2^{-k(n/2+1)} \|u\|_{\dot{B}^{d/2+1}_{2,1}}\|u_l\|_{\dot{B}^{n/2+1}_{2,1}}
\end{align*}
 since
$$
2^k\|\loc_ku\|_{L^\infty}\le\|u\|_{\dot B^1_{\infty,\infty}}\lesssim\|u\|_{\dot{B}^{n/2+1}_{2,1}}.
$$
Above we have used the definition of $\dot B^1_{\infty,\infty}$ and the corresponding embedding from Proposition \ref{prop:BesovLebesgueEmbedding}, and also \eqref{eqn:StandardTrick} to find
$$
\|\loc_ku\|_{L^2}\lesssim d_k 2^{-k(n/2+1)}\|u\|_{\dot{B}^{n/2+1}_{2,1}}.
$$
%which deals with (\ref{t2}).

\noindent To treat \eqref{t2}, define $Q_k=\sum_j[\loc_k,\dot S_{j-1}u_i\partial_{i}]\loc_j u_l$, then applying Lemma \ref{lem:BesovCommutator} we have
$$
 \Big{\|}2^{k(n/2+1)}\|Q_k\|_{L^2}\Big{\|}_{\ell^1} \lesssim \|\nabla u \|_{\dot{B}^{n/2}_{2,\infty}\cap L^{\infty}} \|u\|_{\dot{B}^{n/2+1}_{2,1}} \lesssim \|u\|_{\dot{B}^{n/2+1}_{2,1}}^2,
$$
since $\dot B^{n/2}_{2,1}$ embeds continuously into $L^{\infty}$ and $\dot B^{n/2}_{2,\infty}$ (see Proposition \ref{prop:BesovEmbedding} and \ref
{prop:BesovLebesgueEmbedding}).
%\begin{align*}
  %\|2^{j(d/2+1)}\|Q_j\|_{L^2}\|_{\ell^1}&\le\|\nabla u\|_{L^\infty}\|u\|_{B^{d/2+1}_{2,1}}+\|\nabla u\|_{L^\infty}\|\nabla u\|_{B^{d/2}_{2,1}}\\
  %&\lesssim\|u\|_{B^{d/2+1}_{2,1}}^2.
%\end{align*}
Hence
$$
\|Q_k\|_{L^2} \lesssim d_k 2^{-k(n/2+1)}\|u\|_{\dot{B}^{n/2+1}_{2,1}}^2.
$$

To estimate \eqref{t3} we use Lemma \ref{lem:BonyTEstimate}  and the embeddings from Proposition \eqref{prop:BesovEmbedding}; we have
\begin{align*}
\|\dot T_{\partial_{i}u_l} u_i\|_{\dot B^{n/2+1}_{2,1}}&\lesssim \|\nabla u_l\|_{\dot B^{0}_{\infty,\infty}}\|u_i\|_{\dot B^{n/2+1}_{2,1}}\\
&\lesssim\|u\|^2_{\dot B^{n/2+1}_{2,1}}.
\end{align*}
Using \eqref{eqn:StandardTrick} we find
$$
\|\loc_k\dot T_{\partial_{i}u_l}u_i\|_L^2 \leq d_k 2^{-k(n/2+1)} \|u\|^2_{\dot B^{n/2+1}_{2,1}}.
$$

Finally we consider \eqref{t4}; using Lemma \ref{lem:BonyREstimate} with $p=2$, $(p_1,p_2)=(\infty,2)$, $s_1=1$, $r=1$, $(r_1,r_2)=(\infty,1)$, $s_2=n/2$, we obtain
\begin{align*}
  \|\dot R(u_i, \partial_{i} u_l)\|_{\dot B^{n/2+1}_{2,1}}&\lesssim\|u_i\|_{\dot B^1_{\infty,\infty}}\|\nabla u_l\|_{\dot B^{n/2}_{2,1}}\\
  &\lesssim \|u\|_{\dot B^{n/2+1}_{2,1}}^2,
\end{align*}
since by Proposition \ref{prop:BesovEmbedding}
$$
\dot B^s_{p_1,r_1}\subset \dot B^{s-d(1/p_1-1/p_2)}_{p_2,r_2}.
$$
Again, by \eqref{eqn:StandardTrick}  we find

$$
\|\loc_k \dot R(u_i, \partial_{i} u_l)\|_{L^2} \leq d_k 2^{-k(n/2+1)} \|u\|^2_{\dot B^{n/2+1}_{2,1}}
$$

Combining these estimates yields (\ref{Besovone}).\end{proof}

\section{Conclusion}

Lower bounds in $\dot H^{3/2}$ are now available from a number of sources. Whether it is possible to obtain a strong lower bound in $\dot H^{5/2}$ remains an interesting open question, as does the possibility of obtaining bounds at the optimal rate in $\dot H^s$ for $s>5/2$.

\section*{Acknowledgments}

This work arose as a result of a visit to Warwick by YZ in October 2013. DSM was a member of Warwick's `MASDOC' doctoral training centre, funded by EPSRC grant EP/HO23364.1; JCR was supported by an EPSRC Leadership Fellowship EP/G007470/1, which also funded EJO's sabbatical year at Warwick during which most of this work was completed, and a postdoctoral position for AVL; JLR is partially supported by the European Research Council, grant no. 616797.

\section*{References}

\parindent0pt\parskip5pt

H. Bahouri, J.-Y. Chemin, and R. Danchin. {\it Fourier analysis and nonlinear partial differential equations}. Springer-Verlag (2011).

J. Benameur. On the blow-up criterion of 3D Navier-Stokes equations. {\it J. Math. Anal. Appl.} {\bf 371} (2010) 719–727.

J. Benameur. On the blow-up criterion of the periodic incompressible fluids. {\it Math. Methods Appl. Sci.} {\bf 36} (2013) 143–153.

A. Cheskidov and M. Zaya. Lower Bounds of Potential Blow-Up Solutions of the Three-dimensional Navier-Stokes Equations in $\dot{H}^\frac{3}{2}$. arXiv:1503.01784.

J.C. Cortissoz, J.A. Montero, C.E. Pinilla. On lower bounds for possible blow-up solutions to the periodic Navier-Stokes equation. {\it J. Math. Phys.} {\bf  55}  (2014) 033101.

C.L. Fefferman, D.S. McCormick, J.C. Robinson, and J.L. Rodrigo. Higher order commutator estimates and local existence for the non-resistive {MHD} equations and related models. {\it J. Funct. Anal.} {\bf 267} (2014) 1035--1056.

Y. Giga. Solutions for semilinear parabolic equations in Lp and regularity of weak solutions of the Navier-Stokes system. {\it J. Differ. Equ.} {\bf 62} (1986) 186-–212.

T. Kato and G. Ponce. Commutator estimates and the Euler and Navier--Stokes equations. {\it Comm. Pure Appl. Math.} {\bf 41} (1988) 891--907.

J. Leray. Sur le mouvement d'un liquide visqueux emplissant l'espace. {\it Acta Math.} {\bf 63} (1934) 193-–248 .

D.S. McCormick, J.C. Robinson, and J.L. Rodrigo. Generalised Gagliardo--Nirenberg inequalities using weak Lebesgue spaces and BMO. {\it Milan J. Math.} {\bf 81} (2013) 265--289.

J.A. Montero. Lower bounds for possible blow-up solutions for the Navier--Stokes equations revisited. arXiv: 1503.03063.

J.C. Robinson, W. Sadowski, and R.P. Silva. Lower bounds on blow up solutions of the three-dimensional Navier-Stokes equations in homogeneous Sobolev spaces. {\it J. Math. Phys.} {\bf 53} (2012) 115618

%@article {art:Chemin1992,
%    AUTHOR = {Chemin, J.-Y.},
%     TITLE = {Remarques sur l'existence globale pour le syst\`eme de {N}avier-{S}tokes incompressible},
%   JOURNAL = {SIAM J. Math. Anal.},
%  FJOURNAL = {SIAM Journal on Mathematical Analysis},
%    VOLUME = {23},
%      YEAR = {1992},
%    NUMBER = {1},
%     PAGES = {20--28},
%}
%
%@article {art:RSS2012,
%    AUTHOR = {Robinson, James C. and Sadowski, Witold and Silva, Ricardo P.},
%     TITLE = {Lower bounds on blow up solutions of the three-dimensional {N}avier-{S}tokes equations in homogeneous {S}obolev spaces},
%   JOURNAL = {J. Math. Phys.},
%  FJOURNAL = {Journal of Mathematical Physics},
%    VOLUME = {53},
%      YEAR = {2012},
%    NUMBER = {11},
%     PAGES = {115618, 15},
%}
%

\end{document}